\documentclass[11pt,a4paper]{amsart}
\usepackage{amsmath,amsfonts,amssymb,amscd,amsthm,amsrefs,changes}

\usepackage[a4paper]{geometry}
\usepackage[shortlabels]{enumitem}

\usepackage{color, soul}
\definecolor{gr}{rgb}{0.7, 1, 0.7}
\definecolor{rr}{rgb}{1, 0.7, 0.7}

\usepackage{latexsym}

\usepackage{graphicx}

\usepackage{hyperref}

\theoremstyle{plain} 
\newtheorem{theorem}{Theorem}[section]

\newtheorem{lemma}[theorem]{Lemma}
\newtheorem{corollary}[theorem]{Corollary}

\newtheorem{proposition}[theorem]{Proposition}

\theoremstyle{definition} 
\newtheorem{definition}[theorem]{Definition}
\theoremstyle{remark} 

\renewcommand{\mathfrak}{\mathbf}

\newcommand{\ignore}[1]{}

\newcommand{\bbC}{\mathbb{C}}
\newcommand{\bbP}{\mathbb{P}}

\newcommand{\bbZ}{\mathbb{Z}}

\newcommand{\bfa}{\mathbf{a}}
\newcommand{\bfb}{\mathbf{b}}

\newcommand{\bfm}{\mathbf{m}}

\newcommand{\Rat}{\mathrm{Rat}}
\newcommand{\PSL}{\mathrm{PSL}}

\newcommand{\Cb}{\mathbb{C}}

\title[]{Independence of multipliers via degenerations of rational maps}

\author{Igors Gorbovickis}
\thanks{The research of the author was supported by the German Research Foundation (DFG, project number 455038303).}
\address{Constructor University, Bremen, Campus Ring 1, 28759 Bremen, Germany}
\email{igorbovickis@constructor.university}

\subjclass[2020]{37F10, 37F45}
\keywords{rational maps, multipliers, periodic points, algebraic independence, degenerations}
\date{\today}

\begin{document}
	
	\begin{abstract}
		
		Using degenerations of rational maps and local asymptotics of periodic multipliers, we prove that for every $d\ge 2$, the multipliers of any $2d-2$ distinct periodic orbits of degree $d$ rational maps are algebraically independent over $\bbC$, provided that at most $d$ of the selected orbits are fixed points.
		This condition is sharp because of the Holomorphic Index Formula that relates the multipliers of the $d+1$ fixed points. 
		The result of this paper removes the additional restrictions on periods present in earlier work. 
		The proof proceeds by induction on the degree, using a one-hole degeneration for the induction step and a three-hole degeneration in an exceptional degree four case.”		
	\end{abstract}
	
	\maketitle

	\section{Introduction}
	
	For $d\ge 1$, let $\Rat_d$ be the space of degree $d$ rational maps of the Riemann sphere. There is a natural injective map from $\Rat_d$ to the $(2d+1)$-dimensional complex projective space $\bbP^{2d+1}$ defined by taking
	$$
	f(z) = \frac{p(z)}{q(z)} = \frac{a_dz^d + \ldots + a_0}{b_dz^d+\ldots+b_0}
	$$
	to $(a_0\colon\ldots\colon a_d\colon b_0\colon\ldots\colon b_d)\in \bbP^{2d+1}$. 
	The image of this map is the complement of a certain codimension $1$ algebraic subset, defined as the zero locus of the resultant of $p$ and $q$. Thus, $\Rat_d$ has a structure of an irreducible quasiprojective variety.
	The group $\PSL_2(\bbC)$ acts on $\Rat_d$ by conjugation.  For $d\ge 2$, the corresponding moduli space
	\[
	\mathcal M_d:=\Rat_d/\PSL_2(\bbC)
	\]
	is an affine algebraic variety of dimension $2d-2$; see \cite{Silverman1998} and \cite[Chapter~4]{SilvermanADS}.
	
	A key point in studying the moduli spaces $\mathcal M_d$ is the choice of a parameterization.  The use of multipliers of periodic orbits as local parameters has proved fruitful in several settings.  For instance, Milnor used the fixed-point multipliers to show that $\mathcal M_2$ is biholomorphic to $\bbC^2$ \cite{Milnor1993}. For higher degree maps and arbitrary periods, the multiplier coordinates have been studied in~\cite{McMullen1987},~\cite{Gorbovickis2015},~\cite{Gorbovickis2016},~\cite{JiXie2025},~\cite{Huguin}. Further generalizations for polynomial endomorphisms of $\bbC^n$ and endomorphisms of $\bbP^n$ were obtained in~\cite{GTV_sparsity} and~\cite{GT2025}.

	Let $f\in\Rat_d$, and let $\mathcal O$ be a periodic orbit of $f$ of period $m$ whose multiplier is different from $1$. (Here and below, the period of a periodic orbit $\mathcal O$ is the cardinality of $\mathcal O$, viewed as a finite set, and the multiplier 
	is defined as 
	$$
	\lambda_{\mathcal O}(f):=(f^m)'(z),
	$$
	for $z\in\mathcal O$.) Then, according to the implicit function theorem, in a sufficiently small neighborhood of $f$ in $\Rat_d$, the orbit $\mathcal O$ admits a holomorphic continuation, and its multiplier defines a holomorphic local function. We shall call this function the local multiplier function corresponding to $\mathcal O$ and denote it by $\lambda_{\mathcal O}$. The local multiplier function further extends to a global multiple-valued algebraic function on the whole of $\Rat_d$. Since the multipliers are invariant under M\"obius conjugacy, the (global) multiplier functions also project to multiple-valued algebraic functions on $\mathcal M_d$.

	Notably, McMullen proved that, apart from flexible Latt\`es maps, an element of the moduli space $\mathcal M_d$ is determined up to finitely many choices by the multipliers of all of its periodic orbits \cite{McMullen1987}. 
	In particular, one can select $2d-2=\dim\mathcal M_d$ distinct periodic orbits whose multipliers are algebraically independent and hence form local coordinates at a generic point of $\mathcal M_d$.  This leaves the more precise question of which collections of $2d-2$ periodic-orbit multipliers can be used as local parameters.  Equivalently, do some such collections satisfy ``hidden'' algebraic relations?  A classical example of such a relation is provided by the Holomorphic Index Formula.  Namely, if $f\in\Rat_d$ has $d+1$ distinct fixed points $z_1,\ldots,z_{d+1}$ with multipliers $\lambda_i=f'(z_i)$, then
	\[
	\sum_{i=1}^{d+1}\frac{1}{1-\lambda_i}=1.
	\]
	Thus, the multipliers of all fixed points can never be algebraically independent.

	In this paper we show that, among collections of $2d-2$ distinct periodic orbit multipliers, the Holomorphic Index Formula gives the only ``hidden'' algebraic relation of this type. More specifically, we prove the following:

	\begin{theorem}\label{main_theorem}
		For any integer $d\ge 2$, the multipliers of any $2d-2$ distinct periodic orbits, viewed as multiple-valued algebraic functions on $\Rat_d$, are algebraically independent over $\bbC$, provided that no more than $d$ of these orbits have period $1$.
	\end{theorem}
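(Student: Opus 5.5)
We argue by induction on $d$. Algebraic independence of $2d-2$ multiplier functions on the $(2d-2)$-dimensional space $\mathcal M_d$ is equivalent to the map $f\mapsto(\lambda_{\mathcal O_1}(f),\dots,\lambda_{\mathcal O_{2d-2}}(f))$, defined locally on a branch, being dominant. This in turn is equivalent to its Jacobian having full rank at some point of $\Rat_d$ (modulo conjugation) where all the orbits are defined. Since the multiplier functions are multiple-valued algebraic functions on an irreducible variety, it is enough to find one point, possibly approached along a degenerating family, where the local branches have independent differentials. It is also enough to do this for one branch, because analytic continuation permutes the branches and an algebraic relation holds on one branch iff it holds on all.

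For the base case $d=2$ we pick two orbits of which at most two are fixed points. If both are fixed, Milnor's coordinates $(\sigma_1,\sigma_2)$ on $\mathcal M_2\cong\bbC^2$ show that any two fixed-point multipliers are independent. If one or both have higher period, we would use explicit asymptotics near a degenerate boundary point of $\mathcal M_2$, for instance $f(z)=\lambda z+1/z$-type families with $\lambda\to 0$ or $\infty$. There the multipliers of the relevant periodic orbits behave like distinct monomials in two degeneration parameters, and the exponent vectors can be checked to be linearly independent.

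For the induction step $d-1\to d$ we use a one-hole degeneration: a family $f_t\in\Rat_d$ converging, away from one point $p$ (the hole), to a map $g\in\Rat_{d-1}$, with a degree-one (Möbius) rescaling limit near $p$. Concretely, take $f_{t,a}(z)=g(z)+\frac{t}{z-a}$-type perturbations, giving two extra parameters $(t,a)$ beyond the $2(d-1)-2$ parameters of $g$. Periodic orbits of $f_t$ split into two kinds. Those converging to orbits of $g$ avoiding the hole have multipliers converging to those of $g$. Those passing through or near the hole have multipliers with leading asymptotics governed by $t$, which we expect to be of the form $c\,t^{-k}$ or involving the rescaled Möbius map, where the leading coefficient depends on $a$ and on the local data of $g$. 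Among the $2d-2$ chosen orbits we would select two to be placed near the hole, choosing the degeneration so that the remaining $2d-4$ orbits converge to distinct orbits of $g$ of the same periods, with at most $d-1$ of them fixed. Fixed points are the concern: $f_t$ has $d+1$ fixed points, and exactly one fixed point escapes into the hole (its multiplier blows up). So if exactly $d$ of the chosen orbits are fixed, one of them must be the escaping one, leaving $d-1$ fixed points for $g$. By the induction hypothesis the $2d-4$ surviving multipliers have independent differentials in the $g$-directions. A triangular (block) Jacobian argument in the coordinates $(g,t,a)$ then gives independence, provided the two hole orbits have leading asymptotics whose dependence on $(t,a)$ is independent, e.g. different orders in $t$, or a nonconstant dependence on $a$.

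The main obstacle is arranging the two orbits attached to the hole in all period configurations. The delicate points are showing that an orbit of prescribed period $m$ can always be made to pass through the hole with computable, nondegenerate multiplier asymptotics, and handling the case where both extra orbits must be fixed points or have equal periods. We expect a small low-degree exception here, probably $d=4$ with particular period counts, where the one-hole degeneration cannot separate the asymptotics. For that case we would instead use a three-hole degeneration: a degree-$4$ map degenerating to a degree-$1$ map with three holes. This lets us place several orbits at different holes and compute their multiplier exponents directly, verifying full rank by hand.
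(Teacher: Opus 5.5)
Your overall architecture matches the paper's: induction on $d$ through a one-hole degeneration, the bookkeeping that exactly one fixed point escapes into the hole, and a three-hole degeneration to a M\"obius map in degree $4$. However, the step you label ``the main obstacle'' is left open, and the paper closes the induction differently. It never places an orbit of period $\ge 3$ at the hole. For $g_{a,b}(z)=f(z)\bigl(1+b/(z-a)\bigr)$ it only uses the fixed point near $a$ and the period-two cycles near $\{a,c_i\}$, where $f(c_i)=a$. Any tuple with a period $>2$ whose remaining $2d-3$ periods are not all equal to $2$ is handled directly by the earlier result (Theorem~\ref{old_theorem}, a computation at $z^d$). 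That result also supplies the base case $d=2$ for arbitrary periods; your base case via ``monomial asymptotics'' in $\mathcal M_2$ is only asserted. For the remaining tuples, a counting lemma uses the $d+1$ fixed points and $d(d-1)/2$ period-two cycles in degree $d$. It shows that one can always remove a pair $(2,2)$ or $(1,2)$ and land on an admissible tuple in degree $d-1$, except for $d=4$ with all six periods equal to $2$. That exception is forced by counting, since a cubic has only three period-two cycles; it is not caused by the hole asymptotics failing to separate. Without either this reduction or an actual treatment of period-$m$ orbits through the hole, your induction does not close.

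Even for periods $1$ and $2$, the decisive nondegeneracy is missing. Both hole orbits have multipliers of the same order: $\nu=D(a)/b+O(1)$ and $\mu_i=C_i(a)/b+O(1)$, with $D=-(a-f(a))^2/f(a)$ and $C_i=-f'(c_i)(c_i-f(a))^2/f(a)$. So ``different orders in $t$'' is unavailable. After rescaling, the Jacobian's leading term is $\det(\partial\lambda_j/\partial\tau_k)\cdot\Psi(a_0)/b^3$, with $\Psi=C_1C_2'-C_1'C_2$ or $DC_1'-D'C_1$. Nonconstant dependence of the coefficients on $a$ is not enough; one needs the \emph{ratio} $C_1/C_2$ (resp.\ $C_1/D$) to be nonconstant. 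For example, for $f(z)=qz$, $C$ and $D$ are both linear in $a$, yet $C/D=(1+q)^2/q$. The paper verifies nonconstancy for $f(z)=z^d$. It then uses that $\Psi$ is a rational function on a finite cover of the space of pairs $(f,a)$. Hence $\Psi\neq0$ on a Zariski open set, which meets the open set where the induction hypothesis gives independent old multipliers.

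In the three-hole case, exponents alone cannot give rank six. The asymptotics $\mu_r\sim A_r/b_r$ and $\mu_{rs}\sim A_{rs}/(b_rb_s)$ have exponent vectors $-e_r$ and $-e_r-e_s$, which span only a three-dimensional space. The paper therefore replaces $\mu_{rs}$ by $\rho_{rs}=\mu_{rs}/(\mu_r\mu_s)$, which extends holomorphically to $\bfb=0$ with value $A_{rs}/(A_rA_s)$. This yields a block-triangular limiting Jacobian. The paper then checks that the $3\times3$ Jacobian of these limits in the M\"obius and hole parameters is nonsingular at $M(z)=2z$, $\bfa=(1,3,-3)$.

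Finally, your reduction to one branch needs irreducibility of the marked-cycle variety $X_d^{\bfm}$ (Lau--Schleicher, Morton), not merely of $\Rat_d$.
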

	This is the sharpest possible result, because if the condition of the theorem is violated and $d+1$ fixed points are selected among the $2d-2$ distinct periodic orbits, then the multipliers of these periodic orbits cannot be algebraically independent precisely due to the Holomorphic Index Formula.
	
	Theorem~\ref{main_theorem} strengthens the previous result from~\cite{Gorbovickis2015}, where the following was shown:
	
	\begin{theorem}\label{old_theorem}\cite{Gorbovickis2015}
		For any integer $d\ge 2$, the multipliers of any $2d-2$ distinct periodic orbits, viewed as multiple-valued algebraic functions on $\Rat_d$, are algebraically independent over $\bbC$, provided that either $d=2$ or $d\ge 3$ and the following conditions hold: if $m_1,\ldots,m_{2d-2}$ are the periods of the selected orbits, then
		\begin{enumerate}[(i)]
			\item\label{cond1} no more than $d$ of the periods $m_1,\ldots,m_{2d-2}$ are equal to $1$, and
			\item\label{cond2} at least one of the periods $m_1,\ldots,m_{2d-2}$ is greater than $2$ while the remaining $2d-3$ periods are not simultaneously equal to $2$.
		\end{enumerate}
	\end{theorem}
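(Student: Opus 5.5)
The plan is to reformulate algebraic independence as a rank condition and then prove that condition by induction on $d$, using degenerations of degree $d$ maps to degree $d-1$ maps. Given distinct orbits $\mathcal O_1,\ldots,\mathcal O_{2d-2}$, choose local branches $\lambda_{\mathcal O_1},\ldots,\lambda_{\mathcal O_{2d-2}}$ near some $f\in\Rat_d$. These functions are algebraically independent over $\bbC$ if and only if the map
$$\Lambda=(\lambda_{\mathcal O_1},\ldots,\lambda_{\mathcal O_{2d-2}})$$
is dominant onto $\bbC^{2d-2}$. Since each $\lambda_{\mathcal O_i}$ is conjugation invariant, $\Lambda$ factors through $\mathcal M_d$, which has dimension $2d-2$. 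Dominance is therefore equivalent to $d\Lambda$ having rank $2d-2$ at one point of some branch, or at a boundary point of $\mathcal M_d$, provided the rank is measured in suitable rescaled coordinates. I would not look for an explicit map. Instead I would verify the rank condition asymptotically along a degenerating family, where leading terms of multipliers can be computed.

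\emph{Base case $d=2$.} Use Milnor's identification $\mathcal M_2\cong\bbC^2$ via symmetric functions of the fixed point multipliers. For two fixed points the independence is immediate. For orbits of higher period, I would study the asymptotics of $\lambda_{\mathcal O}$ near the line at infinity of $\mathcal M_2$, where one fixed point multiplier tends to $\infty$ and the map degenerates to a lower degree map. Two distinct orbits give leading terms whose Jacobian can be checked directly to be nonzero.

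\emph{Inductive step $d-1\to d$.} Take a generic $g\in\Rat_{d-1}$ and consider a family $f_t\in\Rat_d$ with $f_t\to g$ away from one point $p$. At $p$ a zero and a pole of $f_t$ collide, forming a ``hole''. This family adds two new parameters, namely the position data and the scale $t$ of the hole. Orbits of $f_t$ that avoid $p$ converge to orbits of $g$, with multipliers converging to those of $g$. There is also one new fixed point near $p$, whose multiplier blows up like $1/t$. Orbits passing through the hole region have multipliers whose leading asymptotics are explicit: a factor coming from the rescaled local model at the hole, times a product of derivatives of $g$ along the rest of the orbit. The induction then proceeds by assigning orbits:
\begin{enumerate}[(a)]
\item $2d-4$ of the selected orbits are assigned to orbits of $g$. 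By the inductive hypothesis, applied with the index constraint counted appropriately, these have independent limits in $\mathcal M_{d-1}$.
\item The remaining two orbits are assigned to orbits that visit the hole, for example the new fixed point and an orbit of period $>2$ through $p$.
\end{enumerate}
With this assignment, $d\Lambda$ becomes block triangular in the coordinates (parameters of $g$, rescaled hole parameters). Its rank is then $2d-2$ provided the two hole orbits have independent leading asymptotics in the two hole parameters.

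The main obstacle is this last block, together with the combinatorial assignment of orbits. The assignment must respect the count of at most $d$ fixed points at each stage. It must also realize the required orbits as passing through the hole with nondegenerate leading terms. Orbits of period $1$ or $2$ interact with the hole in too rigid a way: their leading asymptotics are tied together by the index formula for $g$ and by $f^2$ symmetry. This is exactly why hypothesis~(ii) is imposed, since it guarantees one orbit of period $>2$ that can be threaded through the hole. I would first attempt to compute the $2\times2$ leading Jacobian of (the fixed point at the hole, the period $m>2$ orbit through the hole) explicitly from the rescaled local model. I would then handle the case of remaining periods equal to $2$ separately, by choosing a different degeneration.
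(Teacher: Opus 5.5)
There is a genuine gap: your induction does not close on the class of tuples the statement covers. Your inductive hypothesis is the statement in degree $d-1$, which for $d-1\ge3$ again requires condition (ii). You regard period-$1$ and period-$2$ hole orbits as too rigid, so every pair you send to the hole contains a period $>2$. For any tuple with exactly one entry $>2$, the remaining $2d-4$ periods then lie in $\{1,2\}$ and violate (ii), so the hypothesis cannot be invoked.

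Concretely, $\bfm=(3,1,2,2,2,2)$ in degree $4$ satisfies (i) and (ii), but:
\begin{itemize}
\item removing $\{1,3\}$ leaves four period-two cycles in degree $3$, and a generic cubic has only three;
\item removing $\{2,3\}$ leaves $(1,2,2,2)$, and removing $\{1,2\}$ leaves $(3,2,2,2)$; both violate (ii).
\end{itemize}
The only workable removal is $\{2,2\}$, i.e.\ two period-two hole orbits, which your plan excludes. Your fallback (``a different degeneration'' when the remaining periods are $2$) does not cover $(1,2,2,2)$. If you instead intend the inductive hypothesis to carry only condition (i), you are proving the stronger Theorem~\ref{main_theorem}. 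Then you must handle tuples with all entries in $\{1,2\}$, which needs exactly the period-$1$/period-$2$ hole pairs you dismiss, plus a separate argument for $(2,2,2,2,2,2)$ in degree $4$ (Proposition~\ref{lemma_period_two_degree_four}).

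Your diagnosis of why (ii) is imposed is also wrong, and this is the missing idea. Near a simple hole, $g_{a,b}=f\cdot\bigl(1+b/(z-a)\bigr)$, the new fixed point and the period-two cycles $\{a,c_i\}$ with $f(c_i)=a$ have multipliers $D(a)/b+O(1)$ and $C_i(a)/b+O(1)$. Here $D=-(a-f(a))^2/f(a)$ and $C_i=-f'(c_i)(c_i-f(a))^2/f(a)$. The leading $2\times2$ Jacobians in $(a,b)$ are $(DC_i'-D'C_i)/b^3$ and $(C_iC_j'-C_i'C_j)/b^3$. These are generically nonzero; already for $f(z)=z^d$ the ratios $C_\zeta/D$ and $C_\zeta/C_\eta$ are nonconstant. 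Condition (ii) is an artifact of the explicit computation in the cited work, not an obstruction.

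The paper does not prove this statement by degeneration at all; it cites it. There it follows from the local-to-global criterion, Proposition~\ref{prop_local_to_global}, applied at $g(z)=z^d$ with a direct computation of the multiplier differentials, and no induction. That criterion rests on irreducibility of $X_d^{\bfm}$, which your reformulation uses silently when you pass from one branch to arbitrary orbits.

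Finally, the two computations you defer are the real content. In particular, the base case is asserted, not checked. In degree $2$ the reduced map is a M\"obius map $M$, so $M^{-(m-1)}(a)$ is a single point and at most one period-$m$ cycle passes once through the hole. A pair of equal periods $m\ge2$ therefore cannot be realized by single-pass hole orbits, as your sketch implicitly assumes.
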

	Note that for $d=2$, condition~\ref{cond1} (which is also present in Theorem~\ref{main_theorem}) holds automatically, so Theorem~\ref{main_theorem} essentially shows that for $d\ge 3$, condition~\ref{cond2} in Theorem~\ref{old_theorem} can be omitted.

	The main new ingredient in the proof is a degeneration argument. 
	Degenerations of rational maps and their rescaling limits have been studied extensively; see, for example, DeMarco~\cite{DeMarco2005}, Kiwi~\cite{Kiwi2015}, Luo~\cite{Luo_22}, and the references therein. More recently, periodic multipliers under degeneration have been studied by Favre~\cite{Favre_25}, Gong~\cite{Gong_25} and Favre-Gong~\cite{Favre_Gong_26}; see also Huguin~\cite{Huguin} in the polynomial setting. Our use of degeneration is somewhat different: we exploit explicit local multiplier asymptotics near holes of the limiting degenerate map in order to prove transversality of multiplier functions.

	As an immediate consequence of Theorem~\ref{main_theorem}, we obtain hyperbolic components of disjoint type with prescribed periods of the attracting cycles. Recall that a hyperbolic component is of \emph{disjoint type} if, for every map in the component, all $2d-2$ critical points are simple and are attracted to pairwise distinct attracting cycles.
	Note that each attracting orbit attracts a critical point, while a degree $d$ rational map has $2d-2$ critical points counted with multiplicity.  Consequently, a degree $d$ rational map cannot have more than $2d-2$ distinct attracting cycles. Furthermore, a degree $d$ rational map belongs to a hyperbolic component of disjoint type if and only if it has exactly $2d-2$ distinct attracting cycles.

	\begin{corollary}\label{cor_hyperbolic_components}
		Let $d\ge 2$, and let $(m_1,\ldots,m_{2d-2})$ be a tuple of positive integers for which a degree $d$ rational map can have $2d-2$ distinct periodic orbits of respective exact periods $m_1,\ldots,m_{2d-2}$.  If at most $d$ of the integers $m_i$ are equal to $1$, then $\mathcal M_d$ contains a hyperbolic component of disjoint type characterized by $2d-2$ distinct attracting periodic orbits of respective exact periods $m_1,\ldots,m_{2d-2}$.
	\end{corollary}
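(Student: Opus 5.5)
We will deduce Corollary~\ref{cor_hyperbolic_components} from Theorem~\ref{main_theorem} using the fact that a map with $2d-2$ distinct attracting cycles lies in a hyperbolic component of disjoint type. So it suffices to produce one $f\in\Rat_d$ with $2d-2$ distinct attracting cycles of exact periods $m_1,\ldots,m_{2d-2}$.

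By hypothesis there is $f_0\in\Rat_d$ with distinct periodic orbits $\mathcal O_1,\ldots,\mathcal O_{2d-2}$ of exact periods $m_i$. We first move $f_0$ slightly so that none of these multipliers equals $1$. This is possible because the multipliers are nonconstant: Theorem~\ref{main_theorem} in particular shows each one is not identically constant. An orbit with multiplier different from $1$ persists under small perturbation with the same exact period. Consider a small neighborhood $U$ of a generic such $f_0$ on which the $\mathcal O_i$ continue holomorphically. On $U$ the local multiplier functions $\lambda_{\mathcal O_i}$ are holomorphic, and they are the local branches of the multiple-valued algebraic functions in Theorem~\ref{main_theorem}.

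Since at most $d$ of the $m_i$ equal $1$, Theorem~\ref{main_theorem} says these $2d-2$ branches are algebraically independent. The key step is to pass from algebraic independence to the map
\[
\Lambda=(\lambda_{\mathcal O_1},\ldots,\lambda_{\mathcal O_{2d-2}})\colon U\to\bbC^{2d-2}
\]
having rank $2d-2$ at some point. If the rank were $<2d-2$ everywhere on $U$, the image of the algebraic correspondence would have dimension $<2d-2$. Its Zariski closure would then be a proper subvariety of $\bbC^{2d-2}$, giving a nontrivial polynomial relation among the multipliers, which is a contradiction. Shrinking $U$ around a point of full rank, $\Lambda$ becomes a submersion there. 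By the open mapping property its image contains an open set, so the image of the global multi-valued map is Zariski dense.

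The main obstacle is that we need all multipliers small in modulus, not merely the image being open near some point. Let $V\subset\Rat_d\times\bbC^{2d-2}$ be the irreducible component of the incidence variety (map, marked orbits, multipliers) containing our branch. Its projection to $\bbC^{2d-2}$ is dominant by the above. By Chevalley's theorem the image is constructible and dense, so it contains a nonempty Zariski open set $W$. A nonempty Zariski open subset of $\bbC^{2d-2}$ meets the polydisk $\{|\lambda_i|<1\}$.

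Pick $\lambda\in W$ in that polydisk with all $\lambda_i$ distinct from each other and nonzero, away from the closed locus where marked orbits collide or degenerate. This locus is a proper subvariety of the irreducible $V$, so its image avoids a further open set, which we intersect with $W$. A preimage point gives $f\in\Rat_d$ whose marked orbits $\mathcal O_i$ remain distinct, have exact periods $m_i$, and are attracting. The component of $\mathcal M_d$ containing the class of $f$ is then the required hyperbolic component of disjoint type.

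The delicate point is ensuring the chosen preimage still has the marked orbits distinct and of exact period $m_i$ rather than degenerating. We handle this by working on the irreducible component $V$ and removing proper closed degeneracy loci before applying Chevalley.
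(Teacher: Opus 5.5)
Your argument is the paper's argument. Theorem~\ref{main_theorem} makes the multiplier map on the marked-orbit cover dominant, Chevalley gives a nonempty Zariski open subset of its image, that subset meets the unit polydisk, and a map with $2d-2$ distinct attracting cycles lies in a disjoint-type hyperbolic component. Your preliminary perturbation to non-parabolic orbits and the local rank argument are harmless but unnecessary. Theorem~\ref{main_theorem} is already a statement about the multiplier functions on the marked cover, so dominance is immediate.

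One justification is wrong as written. You discard the degeneracy locus $Z$ (colliding orbits, drop in exact period) by saying it is a proper subvariety of the irreducible $V$, ``so its image avoids a further open set''. That inference needs the map $V\to\bbC^{2d-2}$ to be generically finite. Here $\dim V=\dim\Rat_d=2d+1$, and every nonempty fibre is a union of conjugacy orbits, so has dimension at least $3$. A proper closed subvariety of dimension $2d$ could therefore a priori still dominate $\bbC^{2d-2}$. The conclusion does hold in this case: $Z$ is $\PSL_2(\bbC)$-invariant, so its image has dimension at most $\dim Z-3\le 2d-3$. But the cleaner fix is to apply Chevalley to the open set $V\setminus Z$ itself. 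It is nonempty (it contains your original configuration) and hence dense in the irreducible $V$, so it still dominates and its image contains a nonempty Zariski open set. Any preimage in $V\setminus Z$ of a point of that set inside the polydisk is then a genuine configuration of $2d-2$ distinct attracting cycles of exact periods $m_1,\ldots,m_{2d-2}$. This is what the paper does implicitly, by working on the cover marked by distinct orbits of these exact periods. Your extra requirement that the $\lambda_i$ be distinct and nonzero plays no role.
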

	
	\begin{proof}
		On the algebraic cover of $\Rat_d$ obtained by marking the selected periodic orbits, their multipliers define an algebraic map to $\bbC^{2d-2}$.  By Theorem~\ref{main_theorem}, this map is dominant, and hence its image contains a nonempty Zariski open subset of $\bbC^{2d-2}$. 
		This subset intersects the unit polydisk, so there exists a map $f\in\Rat_d$ with $2d-2$ distinct attracting periodic orbits of the prescribed periods.  
	\end{proof}
	
	\textbf{AI use declaration.} During the preparation of this paper, the author used ChatGPT (OpenAI) for exploratory mathematical discussions, checking computations in Sections~\ref{sec:added_pole} and~\ref{sec:lemma_proof}, and revising parts of the exposition. An early version of the three-hole degeneration used in Section~\ref{sec:lemma_proof} arose from such an interaction and was subsequently substantially developed and refined by the author through further analysis and computation. The author has verified the mathematical arguments and takes full responsibility for the contents of the paper.
	
	\textbf{Acknowledgments.} The author thanks Valentin Huguin for several stimulating discussions. In connection with a different problem, he shared with the author the idea of studying multiplier maps at near-degenerate rational maps; this idea subsequently inspired the approach developed in the present paper.

	\section{Strategy of the proof of Theorem~\ref{main_theorem}}

	\subsection{Marked periodic orbits and the one-hole induction step}
	
	The multipliers as functions are naturally defined on the space of rational maps with marked periodic orbits. That is, for a fixed integer $d\ge 2$, and a tuple of positive integers $\bfm=(m_1,\ldots, m_{2d-2})\in\bbZ_{>0}^{2d-2}$, let 
	$$
	X_d^\bfm\subset\Rat_d\times\widehat{\bbC}^{2d-2}
	$$
	be the closure of the set of all elements $(f,z_1,\ldots,z_{2d-2})\in \Rat_d\times\widehat{\bbC}^{2d-2}$ such that $z_j$ is a periodic point of $f$ of (minimal) period $m_j$, for each $j\in\{1,\ldots,2d-2\}$, and no two points $z_i$ and $z_j$ with $1\le i<j\le 2d-2$ belong to the same periodic orbit.

	As a consequence of the irreducibility result of Lau and Schleicher~\cite{LauSchleicher1994} (see also~\cite{Morton1998} for a more algebraic proof), one can show that, for every $d\ge 2$ and every $\bfm\in \bbZ_{>0}^{2d-2}$ for which the defining locus is nonempty, the set $X_d^\bfm$ is an irreducible algebraic variety.  For a detailed proof, see~\cite{Gorbovickis2015}.  As a consequence, the following local-to-global criterion was obtained in~\cite{Gorbovickis2015}[Proposition~2.5].
	
	\begin{definition}
		We say that a finite collection of local multiplier functions is locally independent at a map $f\in\Rat_d$ if their differentials at $f$ are linearly independent.	
	\end{definition}

	\begin{proposition}\label{prop_local_to_global}[\cite{Gorbovickis2015}, Proposition~2.5]
		Let $d\ge 2$ and $\bfm=(m_1,\ldots,m_{2d-2})\in\bbZ_{>0}^{2d-2}$.  Suppose that there exist a map $g\in\Rat_d$ and $2d-2$ non-multiple periodic points $z_1,\ldots,z_{2d-2}$ of $g$, belonging to distinct periodic orbits of respective periods $m_1,\ldots,m_{2d-2}$, such that their local multiplier functions are locally independent 
		at $g$.  Then the multipliers of any $2d-2$ distinct periodic orbits of respective periods $m_1,\ldots,m_{2d-2}$, viewed as multiple-valued algebraic functions on $\Rat_d$, are algebraically independent over $\bbC$.
	\end{proposition}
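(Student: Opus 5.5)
The plan is to pass from the local statement at $g$ to the generic point of the irreducible variety $X_d^\bfm$, and then to conclude by a dimension count. First, I will work on $X_d^\bfm$ rather than on $\Rat_d$. Here each multiplier $\lambda_j(f,z_1,\ldots,z_{2d-2}) = (f^{m_j})'(z_j)$ is a genuine single-valued regular function (after choosing coordinates, or using a M\"obius-invariant expression at $\infty$). Any branch of the multiple-valued multiplier of a period-$m_j$ orbit on $\Rat_d$ is locally the pullback of $\lambda_j$ under a local section of the projection $\pi\colon X_d^\bfm\to\Rat_d$. Near a point where all marked points are non-multiple, $\pi$ is a local biholomorphism by the implicit function theorem. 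Consequently, an algebraic relation $P(\lambda_{\mathcal O_1},\ldots,\lambda_{\mathcal O_{2d-2}})\equiv 0$ on an open set of $\Rat_d$ gives $P(\lambda_1,\ldots,\lambda_{2d-2})\equiv 0$ on an open subset of the relevant component of $X_d^\bfm$.

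Second, I will use the irreducibility of $X_d^\bfm$ (Lau--Schleicher, as recalled above). Because $X_d^\bfm$ is irreducible, the relation $P(\lambda_1,\ldots,\lambda_{2d-2})=0$, once it holds on a nonempty open set of smooth points, holds on all of $X_d^\bfm$. The same argument run backwards shows that every choice of $2d-2$ distinct orbits of periods $m_1,\ldots,m_{2d-2}$ lifts to $X_d^\bfm$: every local marking of such orbits is some point of $X_d^\bfm$, since the defining locus requires only distinct orbits with the correct periods. Hence it suffices to show that $\lambda_1,\ldots,\lambda_{2d-2}$ are algebraically independent as elements of the function field $\bbC(X_d^\bfm)$.

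Third, I will prove that independence. The point $(g,z_1,\ldots,z_{2d-2})$ lies in $X_d^\bfm$, and near it $X_d^\bfm$ is a smooth graph over $\Rat_d$. By hypothesis, the differentials of $\lambda_1,\ldots,\lambda_{2d-2}$ there are linearly independent. So the map $\Lambda=(\lambda_1,\ldots,\lambda_{2d-2})\colon X_d^\bfm\to\bbC^{2d-2}$ has rank $2d-2$ at that point, and its image contains an open subset of $\bbC^{2d-2}$. Suppose a nonzero polynomial $P$ satisfied $P\circ\Lambda=0$. Then $P$ would vanish on that open set, hence $P\equiv 0$, a contradiction. The resulting independence in $\bbC(X_d^\bfm)$ transfers back to any chosen orbits by step two.

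The main obstacle is step two: it requires care that $X_d^\bfm$ is the closure of a set which is itself irreducible, so that no component other than the one containing $g$'s marking carries the given orbits. This is exactly where the cited irreducibility is used. One also needs the marked points $z_j$ and the multipliers $\lambda_j$ to be well defined and regular on a Zariski open dense subset of $X_d^\bfm$, which holds away from multiple points. I take both facts from \cite{Gorbovickis2015}.
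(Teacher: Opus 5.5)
Your proof is correct and takes the same route as the paper, which attributes this criterion to \cite{Gorbovickis2015} and notes only that it follows from the irreducibility of $X_d^{\bfm}$. You lift the multipliers to regular functions on the irreducible variety $X_d^{\bfm}$, use full rank at the lift of $g$ to get a locally open (hence dominant) multiplier map, and then use irreducibility to carry the absence of polynomial relations over to every choice of $2d-2$ distinct orbits with the given periods.
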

	
	The proof of Theorem~\ref{old_theorem} was obtained in~\cite{Gorbovickis2015} by applying Proposition~\ref{prop_local_to_global} to the map $g(z) = z^d$. An important ingredient was a direct computation of the differentials of the multiplier functions at $g$ which was feasible due to the simple form of the map $g$. At the same time, the computation at $g(z) = z^d$ does not seem to be sufficient to get rid of the condition~\ref{cond2} in Theorem~\ref{old_theorem} and obtain the proof of Theorem~\ref{main_theorem}.
	
	In order to prove Theorem~\ref{main_theorem}, we apply Proposition~\ref{prop_local_to_global} to near-degenerate rational maps $g$. 
	The proof of Theorem~\ref{main_theorem} proceeds by induction on the degree.

	More specifically, for a rational map $f\colon \widehat{\Cb}\to \widehat{\Cb}$ of degree $d\ge 1$ on the Riemann sphere $\widehat{\Cb}$, consider the two-parameter family
	
	\begin{equation}\label{g_ab_eq}
		g_{a,b}(z)
		:=
		f(z)\frac{z-a+b}{z-a}
		=
		f(z)\left(1+\frac{b}{z-a}\right),
	\end{equation}
	where $a, b\in\bbC$ are the parameters. For generic parameters $a, b\in\bbC$, we have $\deg g_{a,b} = \deg f+1 = d+1$. For $b\neq 0$ sufficiently small, $g_{a,b}$ is a degree $d+1$ rational map approaching the boundary of $\Rat_{d+1}$ as $b\to 0$. Viewed in the projective compactification $\overline{Rat}_{d+1}$, the limiting map $g_{a,0}$ is degenerate: it has $a$ as a \textit{hole} of depth one and $f$ as its reduced map, in the terminology of DeMarco~\cite{DeMarco2005}. We will apply Proposition~\ref{prop_local_to_global} to maps $g_{a,b}$ with $b$ close to zero.

	The main new idea is to use degeneration to pass from degree $d$ to degree $d+1$. Starting with a degree $d$ map for which $2d-2$ multiplier differentials are independent, we introduce a small additional pole. As the pole parameter tends to zero, the original periodic orbits persist, while new fixed and period-two orbits appear near the hole. Their multipliers diverge at an explicitly computable rate. By varying the position and size of the hole, these new multipliers provide two additional independent cotangent directions. This yields the induction step from degree $d$ to degree $d+1$. 
	
	A precise formulation of the induction step is given in the following proposition:

	\begin{proposition}\label{prop_induction_step}
		For any integer $d\ge 2$, there exists a nonempty Zariski open subset $\mathcal V\subset \Rat_d\times\bbC$ with the following property:
		if $(f,a_0)\in \mathcal V$ and
		\[
		\mathcal O_1,\ldots,\mathcal O_{2d-2}
		\]
		are non-multiple periodic orbits of $f$ of respective periods
		\[
		m_1,\ldots,m_{2d-2}
		\]
		with the multipliers being locally independent at $f$ 
		and with 
		\[
		a_0\notin \mathcal O_k
		\qquad\text{for all } k=1,\ldots,2d-2,
		\]
		then there exists $b\in\bbC$ arbitrarily close to $0$, with $b\neq 0$, such that the map $g_{a_0,b}$ from~(\ref{g_ab_eq}) has locally independent multipliers of periods
		\[
		2,2,m_1,\ldots,m_{2d-2},
		\]
		and also locally independent multipliers of periods
		\[
		1,2,m_1,\ldots,m_{2d-2}.
		\]
	\end{proposition}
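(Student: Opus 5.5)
We work in coordinates on $\Rat_{d+1}$ near $g_{a_0,b}$ given by $(h,a,b)$, where $h$ ranges over a neighborhood of $f$ in $\Rat_d$. The map $(h,a,b)\mapsto h(z)\,(z-a+b)/(z-a)$ is a local immersion for generic $(f,a_0)$ and $b\ne0$, since its dimension count is $(2d+1)+2=2d+3=\dim\Rat_{d+1}$. Local independence of $2d$ multipliers then means that their differentials have rank $2d$ in these variables.

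\textbf{Persistence of the old orbits.} For $b\to0$ and $a$ near $a_0$, $g_{a,b}\to h$ locally uniformly away from $a$. Since $a_0\notin\mathcal O_k$, each non-multiple orbit $\mathcal O_k$ of $h$ persists, and its multiplier is $\lambda_k(h)+O(b)$ with derivatives in $(h,a)$ controlled. Hence the differentials of $\lambda_{\mathcal O_1},\dots,\lambda_{\mathcal O_{2d-2}}$ restricted to the $h$-directions converge to those at $f$. These have rank $2d-2$ by hypothesis, so they stay independent for small $b$.

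\textbf{New orbits near the hole.} Write $z=a+b w$. Near $a$ we have $g_{a,b}(z)\approx h(a)(1+1/w)$, which is large unless $h(a)$ is near $a$. Instead, look at where points are sent and come back. A fixed point near $a$ must satisfy $z-a\approx b\,h(a)/(z-h(a))$, so $z-a\approx b\,h(a)/(a-h(a))$. Its multiplier is $g'(z)\approx -h(z)\,b/(z-a)^2\sim -(a-h(a))^2/(b\,h(a))$, which diverges like $1/b$. For a period-two orbit, one point $z_1$ lies near $a$ and maps to $z_2$, and $h(z_2)$ must come back close to $a$; so $z_2$ lies near a preimage $c\in h^{-1}(a)$ and $z_1-a\approx b\,h(a)/(c-h(a))$. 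Its multiplier is asymptotically $h'(c)\cdot\bigl(-h(a)b/(z_1-a)^2\bigr)\sim -h'(c)(c-h(a))^2/(b\,h(a))$. Distinct choices of $c$ give distinct period-two orbits.

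For the $(2,2)$ case, pick two preimages $c_1,c_2$ of $a$. For the $(1,2)$ case, pick the fixed point and one period-two orbit. The leading asymptotics are $\lambda\sim\Phi(h,a)/b$ with explicit $\Phi$. It is cleaner to use $\mu=1/\lambda\sim b/\Phi(h,a)$, which is holomorphic up to $b=0$ with $\mu|_{b=0}=0$, $\partial\mu/\partial b=1/\Phi$, and $\partial\mu/\partial a=O(b)$.

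\textbf{Rank computation.} Use the variables $(h,a,b)$ and the functions $(\lambda_{\mathcal O_1},\dots,\lambda_{\mathcal O_{2d-2}},\mu_1,\mu_2)$; since $\lambda\neq 0$, rank for $\mu_j$ is equivalent to rank for $\lambda_j$. Rescale $b$-derivatives by $b$, that is, use the coordinate $\log b$, and divide the $\mu_j$ rows by $b$. Then $\tilde\mu_j=\mu_j/b\to 1/\Phi_j(h,a)$, a nonconstant function. In the limit $b\to0$, the Jacobian has:
\begin{itemize}
\item old rows with $h$-block of rank $2d-2$ and $(a,\log b)$-block equal to $0$;
\item new rows whose $(a,\log b)$-block is
$\begin{pmatrix}\partial_a\Phi_1^{-1}&\Phi_1^{-1}\\ \partial_a\Phi_2^{-1}&\Phi_2^{-1}\end{pmatrix}$.
\end{itemize}
The matrix is block triangular, so full rank $2d$ follows once this $2\times2$ determinant is nonzero. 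Equivalently, we need $\partial_a(\Phi_1/\Phi_2)\ne0$ at $(f,a_0)$.

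This condition is a nontrivial algebraic condition on $(f,a_0)$, so it defines the Zariski open set $\mathcal V$, intersected with genericity conditions: $f(a_0)\ne a_0,\infty$, simple distinct preimages of $a_0$, and nonvanishing derivatives. By continuity, full rank persists for small $b\ne0$.

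\textbf{Main obstacle.} The hard step is showing this determinant is not identically zero for every $d\ge2$. For the $(1,2)$ case the ratio is $h'(c(a))(c(a)-h(a))^2/(a-h(a))^2$. For the $(2,2)$ case it is $h'(c_1)(c_1-h(a))^2/\bigl(h'(c_2)(c_2-h(a))^2\bigr)$. I would try to exhibit one explicit $f$, for example $f(z)=z^d+\epsilon$-perturbations or a map with an explicit preimage branch, and verify that the ratio is nonconstant in $a$, say via its behavior as $a\to\infty$ or as $a$ approaches a critical value where $c(a)$ branches and $h'(c)\to0$.

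Secondary technical issues are the uniformity of the error terms in the asymptotics, which follows from the implicit function theorem in rescaled coordinates $w$, and the fact that $\Rat_d$ has the extra dimension relative to the parameterization. The latter is harmless, because we only need rank $2d$, and the old rows already have rank $2d-2$ in $h$ alone.
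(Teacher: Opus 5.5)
Your reduction is correct and is essentially the paper's argument. The pieces that work are:
\begin{itemize}
\item the asymptotics $\nu\sim D(a)/b$ and $\mu_i\sim C_i(a)/b$ (your $\Phi_j$), with $D=-(a-f(a))^2/f(a)$ and $C_i=-f'(c_i)(c_i-f(a))^2/f(a)$;
\item the persistence of the old orbits, with $\partial_a\Lambda_j=O(b)$ and $b\,\partial_b\Lambda_j=O(b)$;
\item the rescaled limit of the Jacobian, passing to $1/\lambda$ and the coordinate $\log b$.
\end{itemize}
Together these correctly reduce full rank for small $b\neq0$ to $\partial_a(C_1/C_2)(a_0)\neq0$, respectively $\partial_a(C_i/D)(a_0)\neq0$. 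The paper reaches the same condition by extracting the leading term $\det(\partial\lambda_j/\partial\tau_k)\,\Psi_{1,2}(a_0)/b^3$ of the determinant, where $\Psi_{1,2}=-C_2^2(C_1/C_2)'$. Your local-immersion claim is not needed: only the implication ``pulled-back differentials have rank $2d$ $\Rightarrow$ independence on $\Rat_{d+1}$'' is used, and that holds for any holomorphic parametrization.

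The genuine gap is that you never show $\mathcal V$ is nonempty, and this is the real content of the statement. If $\partial_a(C_1/C_2)$ or $\partial_a(C_i/D)$ vanished identically, your $\mathcal V$ would be empty and the proposition vacuous. Your last paragraph only lists possible strategies.

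The gap closes with $f(z)=z^d$ itself; no perturbation is needed. Put $a=t^d$ and $c_\zeta=\zeta t$ with $\zeta^d=1$. Then
\[
\frac{C_\zeta}{D}=d\zeta^{-1}t^{1-d}\frac{(\zeta-t^{d^2-1})^2}{(1-t^{d^2-d})^2},
\qquad
\frac{C_\zeta}{C_\eta}=\frac{\eta}{\zeta}\left(\frac{\zeta-s}{\eta-s}\right)^2,\quad s=t^{d^2-1}.
\]
The first ratio has a pole at $t=0$ because $d\ge2$. The second is a nonconstant function of $s$ because $\zeta\ne\eta$. Hence all of these finitely many derivatives are nonzero at a generic $a$, which is also generic for $z^d$ in the sense of Definition~\ref{generic_def}.

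A second, smaller omission: the claim that ``a nontrivial algebraic condition defines a Zariski open set'' needs an argument, because $C_i$ depends on a multivalued inverse branch $c_i(a)$. The paper handles this as follows:
\begin{itemize}
\item It passes to the incidence varieties $\{f(c)=a\}$ and $\{f(c_1)=f(c_2)=a,\ c_1\ne c_2\}$ over the locus of generic pairs.
\item There $D$ and $C_i$ are rational functions, and $\partial_a$ along a branch is the rational derivation given by $c'(a)=1/f'(c)$.
\item The degeneracy loci are therefore closed on the incidence varieties, and their images under the finite projections are closed.
\end{itemize}
So the complement $\mathcal V$ is Zariski open, and the $z^d$ example shows it is nonempty.
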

	
	There is one exceptional combinatorial case that cannot be reached by this induction, namely the case $d=4$ in which all six selected cycles have period two. We have to cover this case in a separate proposition:

	\begin{proposition}\label{lemma_period_two_degree_four}
		For $d=4$, the multipliers of the six period $2$ cycles, viewed as multiple-valued algebraic functions on $\Rat_4$, are algebraically independent over~$\bbC$.	
	\end{proposition}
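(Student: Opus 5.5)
By Proposition~\ref{prop_local_to_global} with $d=4$ and $\bfm=(2,2,2,2,2,2)$, it is enough to find one map $g\in\Rat_4$ with six distinct non-multiple period-$2$ orbits whose local multiplier functions have linearly independent differentials at $g$. Following the abstract, I would look for $g$ near a degenerate map with three holes. Concretely, take a degree-one reduced map $f$ and attach three small poles:
\[
g(z)=f(z)\prod_{j=1}^{3}\Bigl(1+\frac{b_j}{z-a_j}\Bigr),\qquad b_j\to 0 .
\]
The parameters $(a_j,b_j)_{j=1}^3$ together with the parameters of $f$ give at least six independent directions in $\Rat_4$ modulo conjugation. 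A convenient choice is $f(z)=\mu z$ or $f(z)=-z$. Under $f(z)=-z$ every non-fixed point has period two, so period-two orbits of the reduced map are abundant, and some of them survive the perturbation with controlled multipliers.

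Next I would classify the period-$2$ orbits of $g$ as $b_j\to0$. A degree-$4$ map has $(4^2-4)/2=6$ period-two cycles, so there are exactly six, and I must locate each one. Following the analysis behind Proposition~\ref{prop_induction_step}, each hole $a_j$ produces new orbits close to it. These include the orbits that bounce between $a_j$ and a point near $g(a_j)$, or between two holes $a_i,a_j$ when $f(a_i)\approx a_j$. Their multipliers diverge with explicit leading asymptotics, of the form $\lambda\sim C(a,\mu)/b_j$ or $\lambda\sim C/(b_ib_j)$ for the orbits joining two holes.

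To arrange this, I would impose $f(a_1)=a_2$, $f(a_2)=a_3$, $f(a_3)=a_1$ approximately, which is possible for $f(z)=\mu z$ with $\mu$ near a cube root of unity. Alternatively, I would place the holes so that $f$ swaps pairs of them. The goal is that the six cycles split into groups whose leading asymptotics depend on different subsets of the parameters $(b_1,b_2,b_3,a_1,a_2,a_3,\mu)$.

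With these asymptotics in hand, I would compute the Jacobian of the six functions $\log\lambda_k$ in suitable rescaled coordinates, for instance $\log b_j$ together with the positions of the holes. The leading terms should give a block-triangular matrix whose determinant is nonzero for generic parameters. Error terms are $o(1)$ relative to the leading terms, so the differentials remain independent for small nonzero $b_j$.

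The main obstacle is this last computation. The leading terms are monomials in the $b_j$ and may be degenerate: for example, if all divergent multipliers depend only on the products $b_ib_j$, the rank drops. In that case I would have to expand to the next order in the asymptotics, or use the positions of the holes to break the symmetry. If the three-hole ansatz still gives a rank deficit, my fallback is a direct symbolic computation of the $6\times6$ Jacobian at an explicit near-degenerate map $g$. A single nonvanishing determinant there suffices, by Proposition~\ref{prop_local_to_global}.
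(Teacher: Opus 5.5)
Your frame, Proposition~\ref{prop_local_to_global} applied near a degree-one map with three holes, is the right one. However, the proposal does not yet contain a proof, and several of its concrete choices are exactly the degenerate configurations.

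For $g=M\prod_{r}(1+b_r/(z-a_r))$, a $|b_r|$-neighbourhood of $a_r$ is mapped over all of $\widehat{\bbC}$ except a small neighbourhood of $M(a_r)$. A M\"obius map that is not an involution has no $2$-cycles, so all six cycles are born at the holes:
\begin{itemize}
\item for each $r$, one cycle with points near $a_r$ and near $c_r=M^{-1}(a_r)$, with $\mu_r\sim A_r/b_r$ and $A_r=-M'(c_r)(c_r-M(a_r))^2/M(a_r)$; here $c_r$ is a preimage of the hole, not a point near $g(a_r)=\infty$;
\item for each pair $r<s$, one cycle with points near $a_r$ and $a_s$, with $\mu_{rs}\sim A_{rs}/(b_rb_s)$ and $A_{rs}=(a_s-M(a_r))^2(a_r-M(a_s))^2/(M(a_r)M(a_s))$.
\end{itemize}
The two-hole cycles exist for generic hole positions; no condition of the form $M(a_r)\approx a_s$ is needed. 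These asymptotics require the targets to avoid $M(a_r)$, i.e.\ $M(a_r)\neq c_r$ and $M(a_r)\neq a_s$. Your choice $f(z)=-z$ violates the first condition at every hole, since $f^{-1}=f$. So $A_r\equiv 0$ and the leading-order analysis collapses. Imposing $f(a_1)=a_2$, $f(a_2)=a_3$, $f(a_3)=a_1$, or swapping pairs, makes the corresponding $A_{rs}$ vanish. Near-coincidences are harmless but buy nothing.

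More importantly, the decisive step is left as ``should be nonzero for generic parameters'', with fallbacks. The missing idea is that, to leading order, the $\bfb$-dependence of the six multipliers spans only three directions: $\log\mu_r\approx\log A_r-\log b_r$ and $\log\mu_{rs}\approx\log A_{rs}-\log b_r-\log b_s$. Your $\log b_j$ coordinates are the right instinct. Row-reducing in them shows that the remaining three directions must come from $(M,\bfa)$, through $\rho_{rs}=\mu_{rs}/(\mu_r\mu_s)$.

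The functions $b_r\mu_r$ and $b_rb_s\mu_{rs}$ extend holomorphically to $\bfb=0$. This, and not merely $o(1)$ error terms, is what lets you differentiate the asymptotics. Hence $\rho_{rs}$ extends holomorphically with value $R_{rs}=A_{rs}/(A_rA_s)$. Take a three-parameter family $x\mapsto(M_x,\bfa_x)$ and the Jacobian of $(\mu_1,\mu_2,\mu_3,\rho_{12},\rho_{13},\rho_{23})$ with respect to $(b_1,b_2,b_3,x)$ at $b_1=b_2=b_3=t$. After multiplying its first three rows by $t^2$, it tends as $t\to0$ to a block-triangular matrix with diagonal blocks $-\operatorname{diag}(A_1,A_2,A_3)$ and $\partial(R_{12},R_{13},R_{23})/\partial x$.

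So no higher-order expansion is needed. But you must actually exhibit a family with $\det\partial(R_{12},R_{13},R_{23})/\partial x\neq0$, and this is the substance of the argument. The paper takes $M(z)=qz$ and holes $1,\xi,\xi\eta$. Then $R_{rs}=F(a_s/a_r,q)$ with $F(\zeta,q)=q^2(1-q^2)^{-4}(q-\zeta)^2(q-1/\zeta)^2$. Since $F_\zeta(-1,q)=0$, the $3\times3$ Jacobian is triangular at $\eta=-1$, and at $(q,\xi)=(2,3)$ its diagonal entries are nonzero. Without such a computation, or your unexecuted symbolic fallback, the proposal does not prove the proposition.
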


	In the proof of Proposition~\ref{lemma_period_two_degree_four} we use a different degeneration. Namely, we degenerate a degree $4$ rational map to a M\"obius transformation with three holes. The six period-two cycles are then all created by the degeneration, and the corresponding multiplier Jacobian admits a natural limiting block decomposition.

	The remainder of the paper, Sections~\ref{sec:added_pole} and~\ref{sec:lemma_proof}, is devoted to 
	the proof of Propositions~\ref{prop_induction_step} and~\ref{lemma_period_two_degree_four}. In the following subsection we  give a proof of Theorem~\ref{main_theorem} assuming the results of Proposition~\ref{prop_induction_step} and Proposition~\ref{lemma_period_two_degree_four}.

	\subsection{Proof of the main theorem modulo Propositions~\ref{prop_induction_step} and~\ref{lemma_period_two_degree_four}}

	\begin{definition}
		For any $d\ge 2$, a tuple of $2d-2$ positive integers $\bfm=(m_1,\ldots,m_{2d-2})$ is called \textit{admissible} if a generic degree $d$ rational map admits $2d-2$ distinct periodic orbits of respective periods $m_1,\ldots, m_{2d-2}$ and $\bfm$ satisfies condition~\ref{cond1} of Theorem~\ref{old_theorem}.
	\end{definition}

	\begin{lemma}\label{lemma_admissible_reduction}
		Assume, for some $d\ge 3$, a tuple $\bfm=(m_1,\ldots, m_{2d-2})$ is admissible. Then at least one of the following possibilities holds:
		\begin{enumerate}[(a)]
			\item\label{cond_a} $\bfm$ satisfies condition~\ref{cond2} of Theorem~\ref{old_theorem};
			\item\label{cond_b} $\bfm$ contains a pair of numbers $2, 2$ or $1, 2$, such that after removing this pair of numbers from $\bfm$, one obtains an admissible tuple that corresponds to the degree $d-1$;
			\item\label{cond_c} $d=4$ and $m_1=m_2=\ldots=m_6 = 2$.
		\end{enumerate}
	\end{lemma}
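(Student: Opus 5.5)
The plan is a purely combinatorial case analysis. First I would make admissibility explicit through cycle counts. A generic degree $d$ map has exactly $d+1$ fixed points and $N_d(2)=\frac{(d^2+1)-(d+1)}{2}=\frac{d(d-1)}{2}$ cycles of exact period $2$. For every $m\ge 3$ and $d\ge 2$ it has $N_d(m)\ge 2$ cycles of exact period $m$ (for instance, $N_2(3)=2$), and $N_d(m)$ is nondecreasing in $d$. So a tuple $\bfm$ is admissible for degree $d$ precisely when:
\begin{itemize}
\item the number of entries equal to $1$ is at most $d$ (condition~\ref{cond1});
\item the number of entries equal to $2$ is at most $d(d-1)/2$;
\item for each $m\ge 3$, the multiplicity of $m$ in $\bfm$ is at most $N_d(m)$.
\end{itemize}
In particular, removing a pair of entries from an admissible degree $d$ tuple leaves the multiplicities of all periods $\ge 3$ admissible in degree $d-1$. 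Hence only the counts of $1$'s and $2$'s need to be checked when passing to degree $d-1$. Establishing that these counts capture exactly the generic-existence part of admissibility, using standard cycle counting, is the one point I would state carefully. I expect it to be routine rather than a real obstacle.

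Next, assume (a) fails. Then either (A) every entry of $\bfm$ lies in $\{1,2\}$, or (B) exactly one entry $m>2$ and the remaining $2d-3$ entries equal $2$.

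In case (B), remove a pair $2,2$. This leaves $2d-5$ twos, which must satisfy $2d-5\le (d-1)(d-2)/2$. This holds for all $d\ge 3$, with equality at $d=3,4$. The single period $m\ge3$ is realizable in degree $d-1\ge 2$. So (b) holds.

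In case (A), write $k\le d$ for the number of ones and $t=2d-2-k$ for the number of twos. Since $k\le d<2d-2$, we have $t\ge 1$.
\begin{itemize}
\item If $k=0$, then $t=2d-2\le d(d-1)/2$ forces $d\ge 4$. For $d=4$ we are exactly in (c). For $d\ge5$, removing $2,2$ leaves $2d-4\le (d-1)(d-2)/2$ twos, which holds since $(d-2)(d-5)\ge0$.
\item If $k\ge1$, remove a pair $1,2$. This leaves $k-1\le d-1$ ones and $2d-3-k$ twos, and we need $2d-3-k\le (d-1)(d-2)/2$. For $d\ge5$ this follows from $2d-4\le (d-1)(d-2)/2$. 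For $d=3$ we need $k\ge2$. For $d=4$ we need $k\ge 2$.
\end{itemize}

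The remaining small cases are $d=3,k=1$ (tuple $1,2,2,2$) and $d=4,k=1$ (tuple $1,2,2,2,2,2$). In these I would instead remove $2,2$:
\begin{itemize}
\item For $d=3$ this gives $(1,2)$ in degree $2$. It has one fixed point and one $2$-cycle, so it is admissible.
\item For $d=4$ this gives one $1$ and three $2$'s in degree $3$. Here $1\le 3$ and $3\le 3$, so it is admissible.
\end{itemize}
This exhausts all cases and proves the lemma. The only delicate part is this bookkeeping at $d=3,4$, where the inequality $(d-2)(d-5)\ge0$ fails and the choice of which pair to remove matters.
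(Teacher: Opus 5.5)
Your proof is correct and follows essentially the paper's route: the same split when (a) fails, removal of $2,2$ when exactly one entry exceeds $2$ (the count $2d-5\le (d-1)(d-2)/2$), and bookkeeping of the numbers of $1$'s and $2$'s. The only difference is which pair you remove in the all-$\{1,2\}$ case: you remove $1,2$ whenever $k\ge2$, while the paper removes $2,2$ unless $r=d$; both choices are valid.

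One small slip: monotonicity of $N_d(m)$ in $d$ gives $N_{d-1}(m)\le N_d(m)$. That is the wrong direction for your general remark that the multiplicities of periods $\ge3$ remain admissible in degree $d-1$. The slip is harmless, because once (a) fails there is at most one such entry, and you check that case directly.
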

	\begin{proof}
		For a generic rational map of degree $q\ge 2$, the number of fixed points is $q+1$ and the number of cycles of exact period $2$ is
		\[
		\frac{q^2-q}{2}=\frac{q(q-1)}{2}.
		\]
		Furthermore, a generic rational map of degree $q$ has a cycle of every prescribed exact period $m\ge 3$. 

		Suppose that $\bfm$ does not satisfy condition~\ref{cond_a}.  Then either all entries of $\bfm$ belong to $\{1,2\}$, or exactly one entry is greater than $2$ and all the remaining entries are equal to $2$.
		
		Consider first the latter case.  Removing two entries equal to $2$ leaves one entry greater than $2$ and $2d-5$ entries equal to $2$.  Since
		\[
		\frac{(d-1)(d-2)}{2}-(2d-5)
		=\frac{(d-3)(d-4)}{2}\ge 0
		\]
		for every $d\ge 3$, the resulting tuple is admissible for degree $d-1$.  Thus, possibility~\ref{cond_b} holds.
		
		It remains to consider the case in which all entries of $\bfm$ are equal to $1$ or $2$.  Let $r$ and $s$ denote the respective numbers of entries equal to $1$ and $2$, so that
		\[
		r+s=2d-2,
		\qquad r\le d.
		\]
		Assume first that $d\ge 5$.  If $r=d$, then $s=d-2$; remove one entry equal to $1$ and one entry equal to $2$.  The remaining tuple contains $d-1$ entries equal to $1$ and $d-3$ entries equal to $2$, and hence is admissible for degree $d-1$.  If $r\le d-1$, then $s\ge d-1\ge 2$, so we may remove two entries equal to $2$.  The number of remaining entries equal to $2$ is at most $2d-4$, and
		\[
		2d-4\le \frac{(d-1)(d-2)}{2}
		\]
		for $d\ge 5$.  The resulting tuple is therefore admissible for degree $d-1$.  Thus, possibility~\ref{cond_b} holds in both cases.
		
		For $d=3$, admissibility of $\bfm$ implies $r\ge 1$, since a generic cubic rational map has only three cycles of exact period $2$.  If $r=1$ or $r=2$, remove two entries equal to $2$; if $r=3$, remove one entry equal to $1$ and one entry equal to $2$.  In each case the remaining pair is admissible for degree $2$.  For $d=4$, if $r\in\{1,2,3\}$, remove two entries equal to $2$, while if $r=4$, remove one entry equal to $1$ and one entry equal to $2$.  The resulting tuple is admissible for degree $3$.  The only case not covered is $r=0$, namely
		\[
		\bfm=(2,2,2,2,2,2),
		\]
		which is precisely possibility~\ref{cond_c}.
	\end{proof}
	
	\begin{proof}[Proof of Theorem~\ref{main_theorem}]
		We proceed by induction on $d$.  For $d=2$, the assertion follows from Theorem~\ref{old_theorem}, which imposes no additional restriction on the periods in this degree.
		
		Now let $d\ge 3$ and assume that the theorem holds in degree $d-1$.  Let
		\[
		\bfm=(m_1,\ldots,m_{2d-2})
		\]
		be the tuple of periods of the selected periodic orbits.  By hypothesis, $\bfm$ is admissible.  We apply Lemma~\ref{lemma_admissible_reduction}.
		
		If possibility~\ref{cond_a} holds, then $\bfm$ satisfies both conditions of Theorem~\ref{old_theorem}: condition~\ref{cond1} follows from admissibility, while condition~\ref{cond2} is precisely possibility~\ref{cond_a}.  Hence the desired algebraic independence follows directly from Theorem~\ref{old_theorem}.
		
		Suppose next that possibility~\ref{cond_b} holds.  Remove the pair specified there and denote the resulting admissible tuple for degree $d-1$ by
		\[
		\bfm'=(m'_1,\ldots,m'_{2d-4}).
		\]
		By the induction hypothesis, the multiplier functions of periods $m'_1,\ldots,m'_{2d-4}$ are algebraically independent on $\Rat_{d-1}$.  Equivalently, the corresponding multiplier map on the marked-orbit space is dominant.  Since we work over $\bbC$, its differential has full rank on a nonempty Zariski open subset of the marked-orbit space.  Therefore, the projection of this subset to $\Rat_{d-1}$ is dense and constructible, and hence contains a nonempty Zariski open subset. Similarly, since the set $\mathcal V$ in Proposition~(\ref{prop_induction_step}) is Zariski open and nonempty, its projection to $\Rat_{d-1}$ is also a nonempty Zariski open subset. Since $\Rat_{d-1}$ is irreducible, these two open subsets intersect.  Thus, we may choose a map $f\in\Rat_{d-1}$ with non-multiple periodic orbits of respective periods $m'_1,\ldots,m'_{2d-4}$, whose local multiplier functions are locally independent at $f$, and such that $f$ belongs to the projection of $\mathcal V$. The fiber of $\mathcal V$ over $f$ is a nonempty Zariski open subset of $\bbC$, so a point $a_0\in\bbC$ may be chosen such that $(f,a_0)\in\mathcal V$ and $a_0$ does not belong to any of the selected orbits.

		Apply Proposition~\ref{prop_induction_step} in degree $d-1$.  If the pair removed from $\bfm$ was $(2,2)$, the proposition produces a degree $d$ rational map with locally independent multipliers of periods
		\[
		2,2,m'_1,\ldots,m'_{2d-4},
		\]
		which, up to reordering, is the tuple $\bfm$.  If the removed pair was $(1,2)$, the same proposition instead produces locally independent multipliers of periods
		\[
		1,2,m'_1,\ldots,m'_{2d-4},
		\]
		which again is $\bfm$ up to reordering.  In either case, Proposition~\ref{prop_local_to_global} implies that the multiplier functions corresponding to $\bfm$ are algebraically independent on $\Rat_d$.
		
		Finally, possibility~\ref{cond_c} is exactly the exceptional case $d=4$ in which all six periods are equal to $2$.  This case is supplied by Proposition~\ref{lemma_period_two_degree_four}.  The induction is complete.
	\end{proof}

	\section{Local multiplier asymptotics near a simple hole}\label{sec:added_pole}
	In this section we study the behavior of the period one and period two points that emerge near the pole $z=a$ of the rational map $g_{a,b}$ from~(\ref{g_ab_eq}) as $b\to 0$. The main purpose of this study is to give a proof of Proposition~\ref{prop_induction_step}.
	
	\begin{definition}\label{generic_def}
		We say that $a_0\in\bbC$ is a generic point for a degree $d\ge 1$ rational map $f\colon \widehat{\Cb}\to \widehat{\Cb}$ if the following conditions hold simultaneously:
		
		\begin{enumerate}[(1)]
			\item\label{cond_reg_1} $a_0$ is a regular value for $f$, and $\infty\not\in f^{-1}(a_0)$;
			\item $f(a_0)\neq 0,\infty$ and $f(a_0)-a_0\neq 0$;
			\item $f^2(a_0)\neq a_0$.
		\end{enumerate}
	\end{definition}
	
	In particular, condition~\ref{cond_reg_1} from Definition~\ref{generic_def} implies that if $a_0$ is a generic point for a degree $d\ge 1$ rational map $f$, then $a_0$ has $d$ distinct finite preimages $c_{1,0},\ldots, c_{d,0}$ under $f$ satisfying
	\[
	f(c_{i,0})=a_0,
	\qquad\text{and}\qquad
	f'(c_{i,0})\neq 0,
	\qquad \text{for all }i\in\{1,\ldots, d\}.
	\]
	Note that Definition~\ref{generic_def} is an open condition, so if $a_0\in\bbC$ is a generic point for a rational map $f$, then any point in a sufficiently small neighborhood $U$ of $a_0$ is also generic for $f$. Furthermore, if the neighborhood $U$ is simply connected, then for each $i\in\{1,\ldots, d\}$, the inverse branches
	\begin{equation}\label{inv_branch_eq}
		c_i=c_i(a)
	\end{equation}
	satisfying 
	\[
	f(c_i(a))=a,
	\qquad\text{and}\qquad
	c_i(a_0)=c_{i,0},
	\]
	are well defined and holomorphic in $U$.

	\begin{lemma}[Existence of the local fixed point and $d$ period-two orbits]\label{lem:existence-local-orbits}
		Assume that a point $a_0\in\bbC$ is generic for a degree $d\ge 1$ rational map $f$. Then there exist a neighborhood $U\subset\bbC$ of $a_0$, a neighborhood $V\subset\bbC$ of $0$, and $d+1$ distinct analytic maps 
		$$
		z_0,\ldots,z_d\colon U\times V\to \bbC
		$$ 
		such that for every $(a,b)\in U\times (V\setminus \{0\})$, the following holds. 
		\begin{enumerate}[(i)]
			\item\label{fixed_pt_part} $z_0(a,b)\in \bbC$ is a fixed point of the map $g_{a,b}$ from~(\ref{g_ab_eq}) satisfying the asymptotic identity
			\[
			z_0(a,b)
			=
			a+b\frac{f(a)}{a-f(a)}+O(b^2), 
			\]
			where all the $O(b^2)$-terms are meant as $b\to 0$ and uniformly with respect to $a\in U$;
			
			\item\label{inv_branch_part} the inverse branches $c_1(a),\ldots,c_d(a)$ from~(\ref{inv_branch_eq}) are distinct well-defined and holomorphic functions in $U$;
			
			\item\label{per_2_part} for each $i\in\{1,\ldots,d\}$, the point $z_i(a,b)\in \bbC$ is two-periodic for the map $g_{a,b}$ and satisfies 
			\[
			z_i(a,b)\to a
			\qquad\text{as }\quad b\to 0,
			\]
			with the second point of the two-cycle,
			\[
			w_i(a,b):=g_{a,b}(z_i(a,b))
			\]
			satisfying
			\[
			w_i(a,b)\to c_i(a)
			\qquad\text{as } \quad b\to 0.
			\]
			Furthermore, the following asymptotic identities hold:
			\[
			z_i(a,b)
			=
			a+b\frac{f(a)}{c_i(a)-f(a)}+O(b^2),
			\]
			and
			\[
			w_i(a,b)
			=
			c_i(a)
			+
			\frac{b}{f'(c_i(a))}
			\left(
			\frac{f(a)}{c_i(a)-f(a)}
			-
			\frac{a}{c_i(a)-a}
			\right)
			+O(b^2),
			\]
			where all the $O(b^2)$-terms are meant as $b\to 0$ and uniformly with respect to $a\in U$.
		\end{enumerate}
	\end{lemma}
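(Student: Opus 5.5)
The plan is to obtain all $d+1$ orbits from the implicit function theorem applied to suitably blown-up equations. Near the hole we rescale by writing $z=a+bu$ with $u$ a new unknown. The map then reads
\[
g_{a,b}(a+bu)=f(a+bu)\Bigl(1+\frac1u\Bigr)=f(a+bu)\,\frac{u+1}{u}.
\]
This is analytic in $(a,b,u)$ for $u$ away from $0$, including at $b=0$.

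Part~(ii) is immediate from condition~(1) of Definition~\ref{generic_def} together with the inverse function theorem, after shrinking $U$ to a simply connected neighborhood on which the $d$ preimages stay distinct.

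For part~(i), write the fixed point equation $g_{a,b}(z)=z$ in the rescaled coordinate. Multiplying $f(a+bu)(u+1)/u=a+bu$ by $u$ gives
\[
F_0(a,b,u):=f(a+bu)(u+1)-u(a+bu)=0.
\]
At $b=0$ this becomes $f(a)(u+1)-ua=0$, whose unique solution is $u_0(a)=f(a)/(a-f(a))$. This is well defined because $f(a)\ne a$, and it is nonzero because $f(a)\neq0$. The $u$-derivative at $b=0$ is $f(a)-a\neq 0$. The implicit function theorem therefore gives an analytic $u(a,b)$, and setting $z_0=a+bu$ yields the expansion with a uniform $O(b^2)$ after shrinking $U,V$.

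For part~(iii), set up a system in two unknowns: $u$ for the point near $a$, and $w$ for its image. The first equation is
\[
w=f(a+bu)(u+1)/u,
\]
and the second is
\[
a+bu=g_{a,b}(w)=f(w)\Bigl(1+\frac{b}{w-a}\Bigr).
\]
Both are analytic near $b=0$ as long as $w$ stays away from $a$; here $c_i(a)\neq a$ by condition (2). At $b=0$ the system reduces to $w=f(a)(u+1)/u$ and $f(w)=a$, so $w=c_i(a)$ and $u=f(a)/(c_i(a)-f(a))$. For this to make sense we need $c_i\neq f(a)$, which holds because $f(f(a))\neq a$ by condition~(3); we also need $u\ne 0$.

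The Jacobian in $(u,w)$ at $b=0$ is triangular:
\[
\partial_w\bigl(f(w)-a\bigr)=f'(c_i)\neq 0,
\qquad
\partial_u\bigl(w-f(a)(1+1/u)\bigr)=f(a)/u^2\neq0.
\]
The implicit function theorem then gives analytic $(u_i,w_i)$. The first-order term of $w_i$ comes from differentiating the second equation in $b$ at $b=0$:
\[
f'(c_i)\,\partial_b w=u-\frac{f(c_i)}{c_i-a}=\frac{f(a)}{c_i-f(a)}-\frac{a}{c_i-a}.
\]
This is exactly the stated formula.

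It remains to check that these are genuine orbits of the stated type and that the maps are distinct. Since $w_i\to c_i\ne a=\lim z_i$, we have $z_i\neq w_i$ for small $b$, so the period is exactly $2$ rather than $1$. The $d+1$ maps are distinct because their $u$-values at $b=0$ differ: the $c_i$ are distinct and differ from $a$. I expect no serious obstacle. The main care is bookkeeping that every division, by $u$, $c_i-a$, $c_i-f(a)$ and $a-f(a)$, is nonzero, which is precisely what the genericity conditions guarantee; uniformity of the $O(b^2)$ terms then follows from analyticity on a compact shrinking of $U$.
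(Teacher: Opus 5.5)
Your proposal is correct and follows the paper's proof essentially step by step. It uses the same rescaling $z=a+bu$, the implicit function theorem for the fixed-point equation and for the triangular $(u,w)$-system, the same genericity checks, and a first-order expansion in $b$ to get $w_i$. The only cosmetic differences are that you clear the denominator $u$ in the fixed-point equation, so the nondegeneracy condition becomes $f(a)-a\neq 0$ instead of $f(a)/u_0^2\neq 0$, and that you separate the $d+1$ branches by their distinct $u$-values at $b=0$ rather than by the distinct limits $w_i\to c_i(a)$.
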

	
	\begin{proof}
		Let $U\subset\bbC$ be a neighborhood of $a_0$ such that every point of $U$ is generic for $f$. Further conditions on $U$ will be stated later in the proof.

		We first give a proof of part~\ref{fixed_pt_part}. For fixed $a, b$, make the substitution
		\[
		z=a+bu.
		\]
		Then the fixed point equation $g_{a,b}(z)=z$ is equivalent to
		$$
		a+bu
		=
		f(a+bu)\left(1+\frac1u\right),
		$$
		which can be rewritten as 
		\begin{equation}\label{F_impl_eq}
			F(a,b,u) = 0,\qquad\text{where}\qquad F(a,b,u)
			:=
			a+bu
			-
			f(a+bu)\left(1+\frac1u\right).	
		\end{equation}
		For $b=0$, the equation~(\ref{F_impl_eq}) becomes
		\[
		a=f(a)\left(1+\frac1u\right),
		\]
		and hence has the solution
		\[
		u_0(a):=\frac{f(a)}{a-f(a)}.
		\]
		Since $a\in U$ is a generic point for $f$, it follows that $u_0(a)\neq 0,\infty$. Moreover,
		\[
		\frac{\partial F}{\partial u}(a,0,u_0(a))
		=
		\frac{f(a)}{u_0(a)^2},
		\]
		is well defined and is a finite and nonzero number for all $a\in U$, again due to the fact that $a$ is a generic point for $f$. By the holomorphic implicit function theorem, after shrinking $U$ and choosing a sufficiently small neighborhood $V$ of $0$, there exists a holomorphic function
		\[
		u_0(a,b)
		\]
		on $U\times V$, satisfying
		\[
		u_0(a,0)=\frac{f(a)}{a-f(a)}
		\]
		and
		\[
		F(a,b,u_0(a,b))=0.
		\]
		Therefore
		\[
		z_0(a,b):=a+b u_0(a,b)
		\]
		is a fixed point of $g_{a,b}$, for $b\in V\setminus\{0\}$. Since $u_0(a,b)$ is holomorphic on $U\times V$, after shrinking the neighborhood $U$ again, we have
		\[
		u_0(a,b)=\frac{f(a)}{a-f(a)}+O(b),
		\]
		uniformly for $a\in U$. Hence
		\[
		z_0(a,b)
		=
		a+b\frac{f(a)}{a-f(a)}+O(b^2),
		\]
		uniformly for $a\in U$, which completes the proof of part~\ref{fixed_pt_part}.
		
		Next, we shrink the neighborhood $U$ again if necessary, so that $U$ is bounded and simply connected. Since $U$ consists of generic points of $f$, this immediately implies part~\ref{inv_branch_part}.
		
		We now give a proof of part~\ref{per_2_part}. Fix $i\in\{1,\ldots,d\}$. Again make the substitution
		\[
		z=a+bu,
		\]
		and denote the second point of the two-cycle by $w$. The equations
		\[
		g_{a,b}(z)=w,
		\qquad
		g_{a,b}(w)=z
		\]
		are equivalent to
		\[
		w
		=
		f(a+bu)\left(1+\frac1u\right),\qquad\text{and}\qquad 
		a+bu
		=
		f(w)\left(1+\frac{b}{w-a}\right)
		\]
		which can be rewritten as 
		\begin{equation}\label{equation_per_2}
			\Phi_1(a,b,u,w) = 0\qquad\text{and}\qquad \Phi_2(a,b,u,w) = 0,
		\end{equation}
		where
		\[
		\Phi_1(a,b,u,w)
		:=
		w
		-
		f(a+bu)\left(1+\frac1u\right),
		\]
		and
		\[
		\Phi_2(a,b,u,w)
		:=
		a+bu
		-
		f(w)\left(1+\frac{b}{w-a}\right).
		\]
		At $b=0$, the equations~(\ref{equation_per_2}) become
		\[
		w=f(a)\left(1+\frac1u\right),
		\qquad
		a=f(w).
		\]
		Choosing $w=c_i(a)$, we get the solution
		\[
		u(a) = u_i(a):=\frac{f(a)}{c_i(a)-f(a)},
		\qquad
		w(a) = w_i(a)=c_i(a).
		\]
		Since $a\in U$ is a generic point for $f$, it follows that $u_i(a)\neq 0,\infty$ and $w=c_i(a)\neq a,\infty$. Hence, the Jacobian matrix of $(\Phi_1,\Phi_2)$ with respect to $(u,w)$, evaluated at
		\[
		(a,0,u_i(a),c_i(a)),
		\]
		is well defined and is equal to 
		\[
		\begin{pmatrix}
			\dfrac{f(a)}{u_i(a)^2} & 1\\[1.2em]
			0 & -f'(c_i(a))
		\end{pmatrix}.
		\]
		Its determinant equals
		\[
		-\frac{f(a)f'(c_i(a))}{u_i(a)^2},
		\]
		which is nonzero on $U$. Hence the holomorphic implicit function theorem gives, after possibly shrinking $U$ and $V$, holomorphic functions
		\[
		u_i(a,b),\qquad w_i(a,b),
		\]
		defined on $U\times V$, such that
		\[
		u_i(a,0)=\frac{f(a)}{c_i(a)-f(a)},
		\qquad
		w_i(a,0)=c_i(a),
		\]
		and
		\[
		\Phi_1(a,b,u_i(a,b),w_i(a,b))=0,
		\qquad
		\Phi_2(a,b,u_i(a,b),w_i(a,b))=0.
		\]
		Thus, for $b\in V\setminus\{0\}$,
		\[
		z_i(a,b):=a+b u_i(a,b)
		\]
		is a period 2 point for the map $g_{a,b}$ and satisfies
		\[
		g_{a,b}(z_i(a,b))=w_i(a,b),
		\qquad
		g_{a,b}(w_i(a,b))=z_i(a,b).
		\]
		
		Since $u_i(a,b)$ is holomorphic on $U\times V$, we have
		\[
		u_i(a,b)
		=
		\frac{f(a)}{c_i(a)-f(a)}
		+
		O(b),
		\]
		uniformly for $a\in U$, after possibly shrinking the neighborhood $U$ again. Therefore
		\[
		z_i(a,b)
		=
		a+b\frac{f(a)}{c_i(a)-f(a)}+O(b^2),
		\]
		uniformly for $a\in U$. In particular,
		\[
		z_i(a,b)\to a
		\qquad\text{as } b\to 0.
		\]
		Also,
		\[
		w_i(a,b)\to c_i(a)
		\qquad\text{as } b\to 0,
		\]
		which implies that all functions $z_i$ are distinct.
		
		It remains to compute the first-order expansion of $w_i(a,b)$. Write
		\[
		w_i(a,b)=c_i(a)+b v_i(a)+O(b^2),
		\]
		for some function $v_i(a)$ that needs to be determined.
		Using
		\[
		z_i(a,b)
		=
		a+b u_i(a)+O(b^2),
		\qquad
		u_i(a)=\frac{f(a)}{c_i(a)-f(a)},
		\]
		and expanding the equation
		\[
		z_i(a,b) = g_{a,b}(w_i(a,b)),
		\]
		we get
		\[
		a+b u_i(a)+O(b^2)
		=
		\left(a+b f'(c_i(a))v_i(a)+O(b^2)\right)
		\left(
		1+\frac{b}{c_i(a)-a}+O(b^2)
		\right).
		\]
		Comparing the coefficients of $b$, we obtain
		\[
		u_i(a)
		=
		f'(c_i(a))v_i(a)
		+
		\frac{a}{c_i(a)-a}.
		\]
		Therefore
		\[
		v_i(a)
		=
		\frac{1}{f'(c_i(a))}
		\left(
		\frac{f(a)}{c_i(a)-f(a)}
		-
		\frac{a}{c_i(a)-a}
		\right).
		\]
		Hence
		\[
		w_i(a,b)
		=
		c_i(a)
		+
		\frac{b}{f'(c_i(a))}
		\left(
		\frac{f(a)}{c_i(a)-f(a)}
		-
		\frac{a}{c_i(a)-a}
		\right)
		+
		O(b^2),
		\]
		uniformly for $a\in U$.
		
		Finally, since $c_i(a)\neq a$ on $U$, we have
		\[
		z_i(a,b)\to a,
		\qquad
		w_i(a,b)\to c_i(a)\neq a.
		\]
		Thus, for sufficiently small $V$, the two points $z_i(a,b)$ and $w_i(a,b)$ are distinct for every $a\in U$ and every $b\in V\setminus\{0\}$. Therefore $z_i(a,b)$ has exact period $2$.
	\end{proof}
	
	In the following lemmas we give further asymptotic estimates related to the multipliers of the fixed point $z_0(a,b)$ and the period 2 points $z_1(a,b),\ldots,z_d(a,b)$ of the maps $g_{a,b}$. All computations are valid for $(a,b)$ in the neighborhood $U\times (V\setminus\{0\})$ from Lemma~\ref{lem:existence-local-orbits}. In all cases, for any $k\in\bbZ$, the $O(b^k)$-terms are meant as $b\to 0$ and are uniform with respect to $a\in U$.

	\begin{lemma}[Multiplier of the local fixed point]\label{lem:fixed-multiplier}
		Let
		\[
		\nu(a,b):=g_{a,b}'(z_0(a,b))
		\]
		be the multiplier of the fixed point $z_0(a,b)$ from Lemma~\ref{lem:existence-local-orbits}. Then, as $b\to 0$,
		\[
		\nu(a,b)=\frac{D(a)}{b}+O(1),
		\]
		where
		\begin{equation}\label{Da_eq}
			D(a)=-\frac{(a-f(a))^2}{f(a)}.
		\end{equation}
		Furthermore,
		\[
		\frac{\partial \nu}{\partial b}(a,b)
		=
		-\frac{D(a)}{b^2}+O(1),
		\]
		and
		\[
		\frac{\partial \nu}{\partial a}(a,b)
		=
		\frac{D'(a)}{b}+O(1).
		\]
	\end{lemma}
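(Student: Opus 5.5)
We work in the rescaled coordinate $z=a+bu$ from the proof of Lemma~\ref{lem:existence-local-orbits}, where $z_0(a,b)=a+bu_0(a,b)$ with $u_0$ holomorphic on $U\times V$ and $u_0(a,0)=f(a)/(a-f(a))$. The idea is to write $\nu(a,b)$ as $b^{-1}$ times a function holomorphic on all of $U\times V$. The derivative statements then follow by differentiating this product formula, and no separate asymptotic analysis of the derivatives is needed.

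First, compute the derivative of $g_{a,b}$ directly from~(\ref{g_ab_eq}):
\[
g_{a,b}'(z)=f'(z)\Bigl(1+\frac{b}{z-a}\Bigr)-f(z)\frac{b}{(z-a)^2}.
\]
Substituting $z=z_0=a+bu_0$ gives $z-a=bu_0$, so
\[
\nu(a,b)=f'(a+bu_0)\Bigl(1+\frac{1}{u_0}\Bigr)-\frac{f(a+bu_0)}{b\,u_0^2}.
\]
The point $a$ is generic for $f$, so $f(a)\neq0,\infty$ and $u_0(a,0)\neq0,\infty$. After shrinking $U$ and $V$, both $u_0$ and $1/u_0$ are bounded and holomorphic on $U\times V$, and $f$ and $f'$ are holomorphic near $\overline U$. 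It follows that
\[
\nu(a,b)=\frac{H(a,b)}{b},\qquad H(a,b):=b\,f'(a+bu_0)\Bigl(1+\frac{1}{u_0}\Bigr)-\frac{f(a+bu_0)}{u_0^2},
\]
where $H$ is holomorphic on $U\times V$. At $b=0$ we get $H(a,0)=-f(a)/u_0(a,0)^2=-f(a)(a-f(a))^2/f(a)^2=-(a-f(a))^2/f(a)=D(a)$, which is exactly~(\ref{Da_eq}).

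Next, expand $H(a,b)=D(a)+bH_1(a,b)$ with $H_1$ holomorphic, and shrink $U$ to be relatively compact in a slightly larger domain so that the bounds are uniform in $a$. This immediately gives $\nu=D(a)/b+H_1(a,b)=D(a)/b+O(1)$. For the $a$-derivative, $\partial_a\nu=\partial_aH/b$, and $\partial_aH(a,b)=D'(a)+b\,\partial_aH_1$ with $\partial_aH_1$ bounded by the Cauchy estimates on a slightly smaller set. Hence $\partial_a\nu=D'(a)/b+O(1)$. For the $b$-derivative, $\partial_b\nu=-H/b^2+\partial_bH/b$. Taylor-expanding $H$ to second order, $H=D+bH'(a)+b^2H_2$, gives
\[
\partial_b\nu=-\frac{D}{b^2}-\frac{H'}{b}-H_2+\frac{H'+2bH_2+b^2\partial_bH_2}{b}.
\]
The $1/b$ terms cancel, so $\partial_b\nu=-D/b^2+O(1)$. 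This cancellation is the one point that needs care: the claimed error for $\partial_b\nu$ is $O(1)$ rather than $O(1/b)$, and it holds only because $b\,\partial_b(1/b)=-1/b$ exactly compensates the linear term. Equivalently, $\partial_b(H/b)=\partial_b\bigl((H-bH'(a))/b\bigr)$, and $(H-bH'(a))/b=D/b+bH_2$.

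The remaining work is bookkeeping of uniformity. All $O(\cdot)$ constants come from sup-norms of holomorphic functions and their derivatives on compact subsets of $U\times V$. These are controlled by Cauchy estimates after one more shrinking of $U$ and $V$, which the lemma's conventions permit.
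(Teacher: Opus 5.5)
Your proof is correct and follows essentially the same route as the paper, which also substitutes $z_0-a=bu_0$ into the formula for $g_{a,b}'$ and then uses that $\nu-D(a)/b$ extends holomorphically across $b=0$; your decomposition $\nu=H/b$ with $H=D+bH_1$ holomorphic just makes that extension explicit. The second-order Taylor expansion for $\partial_b\nu$ is unnecessary, since $\partial_b\nu=-D(a)/b^2+\partial_bH_1$ follows directly from $\nu=D(a)/b+H_1$.
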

	
	\begin{proof}
		Differentiating~(\ref{g_ab_eq}), we get
		\begin{equation}\label{g_prime_eq}
			g_{a,b}'(z)
			=
			f'(z)\left(1+\frac{b}{z-a}\right)
			-
			f(z)\frac{b}{(z-a)^2},
		\end{equation}
		while Lemma~\ref{lem:existence-local-orbits} provides the asymptotic estimate
		\[
		z_0(a,b)-a
		=
		b\frac{f(a)}{a-f(a)}+O(b^2).
		\]
		Substituting it into~(\ref{g_prime_eq}) and using 
		\[
		f(z_0)=f(a)+O(b)\qquad\text{and}\qquad f'(z_0)=O(1),
		\]
		we get
		\[
		\nu(a,b)=\frac{D(a)}{b}+O(1).
		\]
		The implicit-function construction above shows that the difference $\nu(a,b)-D(a)/b$ extends holomorphically across $b=0$, and hence its partial derivatives with respect to $a$ and $b$ are $O(1)$. 
		Hence differentiating the Laurent expansion gives the asserted estimates for $\partial_b\nu$ and $\partial_a\nu$.
	\end{proof}

	\begin{lemma}[Multiplier of a local period-two orbit]\label{lem:period-two-multiplier}
		For each $i\in\{1,\ldots,d\}$, let
		\[
		\mu_i(a,b):=g_{a,b}'(z_i(a,b))\,g_{a,b}'(w_i(a,b))
		\]
		be the multiplier of the period-two point $z_i(a,b)$ from Lemma~\ref{lem:existence-local-orbits}. Then, as $b\to 0$,
		\[
		\mu_i(a,b)=\frac{C_i(a)}{b}+O(1),
		\]
		where
		\begin{equation}\label{Cia_eq}
			C_i(a)=
			-\frac{f'(c_i(a))\bigl(c_i(a)-f(a)\bigr)^2}{f(a)}.
		\end{equation}
		Furthermore,
		\[
		\frac{\partial \mu_i}{\partial b}(a,b)
		=
		-\frac{C_i(a)}{b^2}+O(1),
		\]
		and
		\[
		\frac{\partial \mu_i}{\partial a}(a,b)
		=
		\frac{C_i'(a)}{b}+O(1).
		\]
	\end{lemma}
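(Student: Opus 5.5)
The plan mirrors the proof of Lemma~\ref{lem:fixed-multiplier}: write $\mu_i$ as the product of the two factors $g_{a,b}'(z_i)$ and $g_{a,b}'(w_i)$. The first factor has a simple pole in $b$, and the second is holomorphic and bounded across $b=0$.

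For the factor at $z_i$, use formula~(\ref{g_prime_eq}) with $z=z_i(a,b)=a+bu_i(a,b)$, where $u_i(a,0)=f(a)/(c_i(a)-f(a))$ as in Lemma~\ref{lem:existence-local-orbits}. This gives
\[
g_{a,b}'(z_i)=f'(z_i)\Bigl(1+\frac{1}{u_i}\Bigr)-\frac{f(z_i)}{b\,u_i^2}.
\]
Here $u_i(a,0)\neq 0,\infty$ and $f(z_i)=f(a)+O(b)$. Hence
\[
g_{a,b}'(z_i)=-\frac{f(a)}{b\,u_i(a,0)^2}+O(1)=-\frac{(c_i(a)-f(a))^2}{b\,f(a)}+O(1).
\]
More precisely, $b\,g_{a,b}'(z_i)$ is holomorphic on $U\times V$, since $u_i$ is holomorphic and nonvanishing.

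For the factor at $w_i$, note that $w_i(a,b)\to c_i(a)\neq a$, so $w_i-a$ stays bounded away from $0$ on $U\times V$ after shrinking. Therefore $g_{a,b}'(w_i)$ is holomorphic on $U\times V$, and at $b=0$ it equals $f'(c_i(a))$. This gives $g_{a,b}'(w_i)=f'(c_i(a))+O(b)$.

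Multiplying the two factors, $b\,\mu_i(a,b)$ extends holomorphically to $U\times V$ with value $C_i(a)$ at $b=0$, where $C_i$ is as in~(\ref{Cia_eq}). Consequently $\mu_i-C_i(a)/b=:h(a,b)$ is holomorphic on $U\times V$. After shrinking $U$ and $V$ to relatively compact subsets, $h$ and its partial derivatives are bounded. Differentiating $\mu_i=C_i(a)/b+h$ then yields $\partial_b\mu_i=-C_i(a)/b^2+O(1)$ and $\partial_a\mu_i=C_i'(a)/b+O(1)$, uniformly in $a$.

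The only point requiring care is this last step, the holomorphic extension of $b\,\mu_i$ across $b=0$ jointly in $(a,b)$. It follows directly from the implicit function construction in Lemma~\ref{lem:existence-local-orbits}: $u_i$ and $w_i$ are holomorphic on $U\times V$, $u_i$ is nonvanishing, and $w_i-a$ is nonvanishing. The uniformity of the $O(\cdot)$ terms comes from the standard Cauchy estimates after a final shrinking of the neighborhoods.
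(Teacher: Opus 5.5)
Your proof is correct and follows the paper's argument. You factor $\mu_i=g_{a,b}'(z_i)\,g_{a,b}'(w_i)$, extract the simple pole of the first factor from the formula for $g_{a,b}'$ using $z_i=a+bu_i$, observe that the second factor equals $f'(c_i(a))+O(b)$, and differentiate $\mu_i=C_i(a)/b+h(a,b)$ with $h$ holomorphic across $b=0$. Your remark that $b\,g_{a,b}'(z_i)=b f'(z_i)(1+1/u_i)-f(z_i)/u_i^2$ is holomorphic, because $u_i$ is holomorphic and nonvanishing, is precisely the justification the paper compresses into ``the implicit-function construction above shows that the difference extends holomorphically.''
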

	
	\begin{proof}
		By Lemma~\ref{lem:existence-local-orbits},
		\[
		z_i(a,b)-a
		=
		b\frac{f(a)}{c_i(a)-f(a)}+O(b^2).
		\]
		Substituting this into~(\ref{g_prime_eq}), we get
		\[
		g_{a,b}'(z_i)
		=
		-\frac{(c_i(a)-f(a))^2}{b f(a)}+O(1).
		\]
		On the other hand, since $w_i(a,b)\to c_i(a)$ and $c_i(a)\neq a$,
		\[
		g_{a,b}'(w_i)=f'(c_i(a))+O(b).
		\]
		Multiplying the two estimates gives
		\[
		\mu_i(a,b)
		=
		-\frac{f'(c_i(a))(c_i(a)-f(a))^2}{b f(a)}+O(1)
		=
		\frac{C_i(a)}{b}+O(1).
		\]
		The implicit-function construction above shows that the difference $\mu_i(a,b)-C_i(a)/b$ extends holomorphically across $b=0$, and hence its partial derivatives with respect to $a$ and $b$ are $O(1)$. 
		Hence, the asymptotic formulas for $\partial_a\mu_i$ and $\partial_b\mu_i$ follow.	
	\end{proof}

	For any pair $i, j\in\{1,\ldots,d\}$ with $i\neq j$ and any $(a,b)\in U\times (V\setminus\{0\})$, we consider the determinant of the $2\times 2$ Jacobian matrix
	\begin{equation}\label{J_ij_eq}
	J_{i,j}(a,b):=  \det
	\begin{pmatrix}
		\partial_a\mu_i(a,b) & \partial_b\mu_i(a,b)\\
		\partial_a\mu_j(a,b) & \partial_b\mu_j(a,b)
	\end{pmatrix}.		
	\end{equation}
	Similarly, for any $i\in \{1,\ldots,d\}$ and any $(a,b)\in U\times (V\setminus\{0\})$, we consider the determinant of the $2\times 2$ Jacobian matrix
	$$
	J_{0,i}(a,b)
	:=
	\det
	\begin{pmatrix}
		\partial_a\nu(a,b) & \partial_b\nu(a,b)\\
		\partial_a\mu_i(a,b) & \partial_b\mu_i(a,b)
	\end{pmatrix}.
	$$
	Applying Lemma~\ref{lem:fixed-multiplier} and Lemma~\ref{lem:period-two-multiplier}, for any $i, j\in\{1,\ldots,d\}$ with $i\neq j$, we obtain the asymptotic formulas
	\[
	J_{0,i}(a,b)
	=
	\frac{D(a)C_i'(a)-D'(a)C_i(a)}{b^3}
	+O\left(\frac1{b^2}\right)
	\]
	and
	\[
	J_{i,j}(a,b)
	=
	\frac{C_i(a)C_j'(a)-C_i'(a)C_j(a)}{b^3}
	+O\left(\frac1{b^2}\right),
	\]
	where $D(a)$ and $C_i(a)$ are defined in~(\ref{Da_eq}) and~(\ref{Cia_eq}) respectively. We define the leading terms of these expressions:
	\begin{equation}\label{Phi_eq}
		\Phi_i(a):= D(a)C_i'(a)-D'(a)C_i(a),
	\end{equation}
	\begin{equation}\label{Psi_eq}
		\Psi_{i,j}(a):= C_i(a)C_j'(a)-C_i'(a)C_j(a).
	\end{equation}

	\begin{lemma}\label{lem:generic-jacobian-nondegeneracy}
		Assume $d\ge 2$. Then there exists a nonempty Zariski open subset $\mathcal V\subset\Rat_d\times\bbC$, such that for any $(f,a)\in\mathcal V$, the point $a$ is generic for $f$ and satisfies $\Phi_i(a)\neq 0$ and $\Psi_{i,j}(a)\neq 0$, for any $i,j\in\{1,\ldots,d\}$ with $i\neq j$.	
	\end{lemma}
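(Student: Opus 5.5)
The plan is to show that the bad locus is a proper Zariski closed subset of $\Rat_d\times\bbC$. Nonemptiness of its complement then follows by exhibiting a single explicit pair $(f,a)$.

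\emph{Algebraicity.} The genericity conditions of Definition~\ref{generic_def} are each of the form ``some polynomial in the coefficients of $f$ and in $a$ does not vanish'': the discriminant of $f(z)-a$ in $z$, the leading coefficient of the numerator of $f(z)-a$, and $f(a)$, $f(a)-a$, $f^2(a)-a$ with denominators cleared. So they define a Zariski open set $\mathcal G$. The functions $\Phi_i,\Psi_{i,j}$ depend on the individual inverse branches $c_i(a)$, which are not single valued. However, by the implicit relation $f(c_i(a))=a$ we have $c_i'(a)=1/f'(c_i(a))$, and $C_i'$ can likewise be expressed polynomially in $c_i$, $a$ and the coefficients of $f$, with denominators $f(a)$ and $f'(c_i)$. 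Hence the product
\[
P(f,a):=\prod_{i=1}^d\Phi_i(a)\prod_{i\neq j}\Psi_{i,j}(a)
\]
is a symmetric expression in the roots $c_1,\ldots,c_d$ of $f(z)=a$. It is therefore a rational function on $\Rat_d\times\bbC$, regular on $\mathcal G$. We set $\mathcal V:=\mathcal G\cap\{P\neq 0\}$, which is Zariski open. It remains to show $\mathcal V\neq\emptyset$.

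\emph{Reformulation.} Note that $\Psi_{i,j}=C_i^2\,(C_j/C_i)'$ and $\Phi_i=D^2\,(C_i/D)'$. Since $C_i,D\neq0$ for generic $a$, it suffices to find $f$ for which each ratio $C_j/C_i$ ($i\ne j$) and each ratio $C_i/D$ is a nonconstant holomorphic function of $a$. Each such ratio then has a nonzero derivative outside a discrete set, and a single $a$ can avoid finitely many discrete sets as well as the complement of $\mathcal G$.

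\emph{Example.} Take $f(z)=\lambda z^d$ with generic $\lambda\neq0$, and restrict to $a$ near some $a_0\neq0$ generic for $f$. Parametrize by $t$ with $a=\lambda t^d$, so that $c_i=\zeta^i t$ with $\zeta=e^{2\pi i/d}$, $f'(c_i)=da/c_i$, and $F:=f(a)=\lambda^{d+1}t^{d^2}$. Then
\[
\frac{C_j}{C_i}=\zeta^{i-j}\left(\frac{\zeta^j-s}{\zeta^i-s}\right)^{2},\qquad s:=F/t=\lambda^{d+1}t^{d^2-1}.
\]
This is the square of a nondegenerate M\"obius function of the nonconstant function $s$, since $\zeta^i\neq\zeta^j$. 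Hence it is nonconstant.

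Similarly,
\[
\frac{C_i}{D}=d\,\lambda\,\zeta^{-i}\,t^{d-1}\frac{(\zeta^i t-F)^2}{(\lambda t^d-F)^2},
\]
which behaves like a nonzero constant times $t^{1-d}$ as $t\to0$. It is therefore nonconstant for $d\ge2$. Because the ratios are nonconstant meromorphic functions of $t$, they are nonconstant in $a$ on any small disk. We then choose $a$ in such a disk avoiding the finitely many critical points of the ratios and the non-generic values; this gives $(f,a)\in\mathcal V$.

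The main obstacle I expect is the first step. One must carefully justify that $P$ really is a regular (rational) function on the Zariski open set $\mathcal G$. This involves expressing the derivatives $c_i'$ and $C_i'$ polynomially in the roots, and ensuring that the symmetrization does not introduce poles on $\mathcal G$. The nondegeneracy computation for $z^d$ is routine once reduced to the two ratio computations above.
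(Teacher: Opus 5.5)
Your proposal is correct and follows essentially the same route as the paper: you reduce to exhibiting a single good pair, take the power map with $a=t^d$ and $c=\zeta t$ (your extra factor $\lambda$ is harmless), and show that $C_j/C_i$ and $C_i/D$ are nonconstant via $\Psi_{i,j}=C_i^2(C_j/C_i)'$ and $\Phi_i=D^2(C_i/D)'$. The only difference is in how the algebraicity step is packaged. You take the product $P$ over all branches to get one rational function on $\Rat_d\times\bbC$ that is regular on the generic locus, while the paper works on incidence varieties of marked preimages and pushes the bad loci down along the finite projections; both are standard and equivalent here.
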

	
	\begin{proof}
		First, we observe that to prove the lemma, it is enough to exhibit a pair $(f,a)$ for which all the functions $\Phi_i(a)$ and $\Psi_{i,j}(a)$ are defined and nonzero. Indeed, consider the Zariski open locus $B\subset\Rat_d\times\bbC$ that consists of all pairs $(f,a)$ where $a$ is generic for $f$. Consider the incidence varieties that consist of all elements of $B$ with one or two distinct marked preimages of $a$, given respectively by
		\[
		f(c_i)=a
		\qquad\text{and}\qquad
		f(c_i)=f(c_j)=a,\quad c_i\neq c_j.
		\]
		Their projections to $B$ are finite. Moreover, if $c=c(a)$ is a local inverse branch of $f$, then
		\[
		c'(a)=\frac{1}{f'(c(a))}.
		\]
		Thus differentiation with respect to $a$ along an inverse branch is a rational derivation on the corresponding incidence variety. Since, according to~(\ref{Da_eq}) and~(\ref{Cia_eq}), $D$ and the $C_i$ are rational functions there, it follows that $\Phi_i$ and $\Psi_{i,j}$ are rational functions on the respective incidence varieties.
		
		The loci \(G_{\Phi_i}\) and \(G_{\Psi_{i,j}}\) where these rational functions are defined and nonzero are Zariski open. 		
		Since the incidence projections are finite, the images of the complements of \(G_{\Phi_i}\) and \(G_{\Psi_{i,j}}\) are closed in $B$. Therefore, to prove the lemma, it is enough to exhibit a pair $(f,a)\in B$ such that all the functions $\Phi_i(a)$ and $\Psi_{i,j}(a)$ are defined and nonzero. 
		
		We use the polynomial map
		\[
		f(z)=z^d.
		\]
		Write
		$$
		a=t^d,\qquad \text{for some}\quad t\in\bbC
		$$
		and set
		\begin{equation}\label{c_zeta_eq}
			c_\zeta=\zeta t,
			\qquad \text{where }\zeta \text{ is a root of unity}\qquad
			\zeta^d=1.
		\end{equation}
		Then $f(c_\zeta)=a$, and we obtain 
		\[
		f(a)=t^{d^2},
		\qquad
		f'(c_\zeta)=d\zeta^{-1}t^{d-1}
		\]
		and
		\[
		D(a)
		=
		-t^{2d-d^2}\left(1-t^{d^2-d}\right)^2.
		\]
		In what follows, we are going to index the functions $C_i$, $\Phi_i$ and $\Psi_{i,j}$ by the roots of unity in~(\ref{c_zeta_eq}) rather than by the numbers from $1$ to $d$. We obtain
		\[
		C_\zeta(a)
		=
		-d\zeta^{-1}t^{d+1-d^2}
		\left(\zeta-t^{d^2-1}\right)^2
		\]
		and
		\begin{equation}\label{C_over_D_power_map_eq}
			\frac{C_\zeta(a)}{D(a)}
			=
			d\zeta^{-1}t^{1-d}
			\frac{\left(\zeta-t^{d^2-1}\right)^2}
			{\left(1-t^{d^2-d}\right)^2}.
		\end{equation}
		Since $d\ge 2$, the right-hand side has a pole at $t=0$ and is therefore not constant. Hence
		\[
		\left(\frac{C_\zeta}{D}\right)'\not\equiv 0.
		\]
		Since
		\[
		\Phi_\zeta
		=
		D^2\left(\frac{C_\zeta}{D}\right)',
		\]
		we obtain $\Phi_\zeta\not\equiv 0$ for every $d$-th root of unity $\zeta$.
		
		Similarly, if $\zeta\neq\eta$ are two distinct $d$-th roots of unity, then
		\begin{equation}\label{C_ratio_power_map_eq}
			\frac{C_\zeta(a)}{C_\eta(a)}
			=
			\frac{\eta}{\zeta}
			\left(
			\frac{\zeta-t^{d^2-1}}{\eta-t^{d^2-1}}
			\right)^2.
		\end{equation}
		This function is not constant, because $\zeta\neq\eta$. Hence
		\[
		\left(\frac{C_\zeta}{C_\eta}\right)'\not\equiv 0.
		\]
		Since
		\[
		\Psi_{\zeta,\eta}
		=
		-C_\eta^2\left(\frac{C_\zeta}{C_\eta}\right)',
		\]
		we obtain $\Psi_{\zeta,\eta}\not\equiv 0$ for every pair $\zeta\neq\eta$.
		
		Thus, for the map $f(z)=z^d$, none of the functions $\Phi_i$ and $\Psi_{i,j}$ is identically zero. Since there are only finitely many of them, we can choose a generic value of $a$ at which they are all defined and nonzero. The finite-projection argument above now gives a nonempty Zariski open subset $\mathcal V\subset\Rat_d\times\bbC$ such that any $(f,a)\in\mathcal V$ has the required properties. This completes the proof of Lemma~\ref{lem:generic-jacobian-nondegeneracy}.
	\end{proof}

	\begin{proof}[Proof of Proposition~\ref{prop_induction_step}]
		
		We show that the set $\mathcal V\subset\Rat_d\times\bbC$ can be chosen the same as in Lemma~\ref{lem:generic-jacobian-nondegeneracy}. Let $(f, a_0)\in\mathcal V$ be an arbitrary point and let
		\[
		\mathcal O_1,\ldots,\mathcal O_{2d-2}
		\]
		be non-multiple periodic orbits of $f$ of respective periods
		\[
		m_1,\ldots,m_{2d-2}
		\]
		with the multipliers being locally independent at $f$ 
		and with 
		\[
		a_0\notin \mathcal O_k
		\qquad\text{for all } k=1,\ldots,2d-2.
		\]
		Since the multipliers of the orbits $\mathcal O_1,\ldots,\mathcal O_{2d-2}$ are locally independent at $f$, there exists a holomorphic family
		\[
		f_{\tau}\in\Rat_d,
		\qquad
		\text{with}\qquad\tau=(\tau_1,\ldots,\tau_{2d-2})
		\]
		defined for $\tau$ in a neighborhood of $0\in\bbC^{2d-2}$, such that $f_0=f$ and
		\begin{equation}\label{old_multiplier_jacobian_eq}
			\det\left(
			\frac{\partial \lambda_j(f_\tau)}{\partial \tau_k}\bigg|_{\tau=0}
			\right)_{j,k=1}^{2d-2}
			\neq 0,
		\end{equation}
		where each $\lambda_j(f_\tau)$ denotes the local multiplier function corresponding to the analytic continuation of the respective orbit~$\mathcal O_j$.
		
		For the parameters $a$ near $a_0$ and $b$ near $0$, set
		\[
		G_{\tau,a,b}(z)
		:=
		f_\tau(z)\frac{z-a+b}{z-a}.
		\]
		Note that for fixed parameters $a$ and $\tau$, the maps $G_{\tau,a,b}$ converge to $f_\tau$ uniformly on compact subsets of $\widehat\bbC\setminus \{a\}$ when $b\to 0$. Since the orbits $\mathcal O_j$ of $f_0$ avoid $a_0$ and have multipliers different from $1$, after shrinking the parameter neighborhood if necessary, these orbits continue holomorphically as periodic orbits of $G_{\tau,a,b}$. For each $j\in\{1,\ldots,2d-2\}$, let
		\[
		\Lambda_j(\tau,a,b)
		\]
		be the corresponding multipliers of these periodic orbits viewed as periodic orbits of $G_{\tau,a,b}$. It follows that they are analytic as functions of $(\tau, a, b)$ in a neighborhood of $(0, a_0, 0)$. Hence
		\begin{equation}\label{old_multiplier_estimates_eq}
			\frac{\partial \Lambda_j}{\partial \tau_k}(0,a_0,b)
			=
			\frac{\partial \lambda_j(f_\tau)}{\partial \tau_k}\bigg|_{\tau=0}
			+O(b),
		\end{equation}
		\begin{equation}\label{dLambda_ab_eq}
		\frac{\partial \Lambda_j}{\partial a}(0,a_0,b)=O(b),
		\qquad
		\frac{\partial \Lambda_j}{\partial b}(0,a_0,b)=O(1),
		\end{equation}
		as $b\to 0$. Here, the terms $O(b)$ and $O(1)$ are uniform in $a$ and $\tau$ when $a-a_0$ and $\tau$ are sufficiently small.
		
		We first add two period-two multipliers. Choose two distinct period $2$ points of $G_{\tau,a,b}$, say $z_1$ and $z_2$, constructed in Lemma~\ref{lem:existence-local-orbits}. Let
		\begin{equation}\label{mu1mu2_eq}
			\mu_1(\tau,a,b),
			\qquad\text{and}\qquad
			\mu_2(\tau,a,b)		
		\end{equation}
		be their multipliers.
		
		By Lemma~\ref{lem:period-two-multiplier}, applied uniformly to the nearby maps $f_\tau$, we have
		\[
		\mu_i(\tau,a,b)
		=
		\frac{C_i(\tau,
			a)}{b}+O(1),
		\qquad i=1,2,
		\]
		where $C_i(\tau,a_0)$ is defined by the same expression as $C_i(a_0)$ from~(\ref{Cia_eq}), with $f$ substituted by $f_\tau$. Therefore, we have
		\begin{equation}\label{dMu_tau_eq}
		\frac{\partial \mu_i}{\partial \tau_k}(0,a_0,b)=O\left(\frac1b\right).
		\end{equation}
		Consider the $2d\times 2d$ Jacobian matrix $J(b)$ of the multipliers
		\[
		\Lambda_1,\ldots,\Lambda_{2d-2},\mu_1,\mu_2
		\]
		with respect to the parameters
		\[
		\tau_1,\ldots,\tau_{2d-2},a,b,
		\]
		evaluated at $\tau_1=\ldots=\tau_{2d-2}=0$, $a=a_0$ and $b$ in a punctured neighborhood of zero. That is,
		$$
		J(b) := \left(\left.
		\frac{\partial(\Lambda_1,\ldots,\Lambda_{2d-2},\mu_1,\mu_2)}
		{\partial(\tau_1,\ldots,\tau_{2d-2},a,b)}
		\right|_{\tau=0, a=a_0}\right).
		$$
		Write $J(b)$ in the block form as
		$$
		J(b) = 
			\begin{pmatrix}
			A(b) & \vline &X(b) \\
			\hline
			Y(b) &  \vline &M(b)	
		\end{pmatrix},
		$$
		where $A(b)$ is the $(2d-2)\times (2d-2)$ block formed by differentiating the old multipliers $\Lambda_j$ with respect to the parameters $\tau_k$, and $M(b)$ is the $2\times 2$ matrix obtained by differentiating $\mu_1$ and $\mu_2$ with respect to $a$ and $b$.
		
		It follows from~(\ref{old_multiplier_estimates_eq}),~(\ref{dLambda_ab_eq}),~(\ref{dMu_tau_eq}) and Lemma~\ref{lem:period-two-multiplier} that
		\[
		A(b)=
		\left(
		\frac{\partial \lambda_j(f_\tau)}{\partial \tau_k}\bigg|_{\tau=0}
		\right)_{j,k=1}^{2d-2}
		+O(b),
		\qquad Y(b) = O\left(b^{-1}\right),
		\]
		and for the last two columns of $J(b)$, there is an asymptotic relation
		$$
		\begin{pmatrix}
			X(b) \\
			\hline
			M(b)	
		\end{pmatrix}
		=
		\begin{pmatrix}
			O(b) & O(1) \\
			\hline
			O(b^{-1}) &  O(b^{-2})	
		\end{pmatrix}
		$$
		Now, after dividing the second to last column of $J(b)$ by $b$ and multiplying the last two rows of $J(b)$ by $b^2$, it becomes evident that 		
		the only contribution of order $b^{-3}$ to the determinant $\det J(b)$ comes from the product of the old Jacobian block $A(b)$ and the $2\times 2$ Jacobian block $M(b)$. The computation, preceding Lemma~\ref{lem:generic-jacobian-nondegeneracy}, also yields
		$$
		\det M(b) = \frac{\Psi_{1,2}(a_0)}{b^3}
		+
		O\left(\frac1{b^2}\right),
		$$
		so we get
		\[
		\det J(b)
		=
		\det\left(
		\frac{\partial \lambda_j(f_\tau)}{\partial \tau_k}\bigg|_{\tau=0}
		\right)
		\frac{\Psi_{1,2}(a_0)}{b^3}
		+
		O\left(\frac1{b^2}\right).
		\]
		Since $(f,a_0)\in \mathcal V$, Lemma~\ref{lem:generic-jacobian-nondegeneracy} gives
		\[
		\Psi_{1,2}(a_0)\neq 0.
		\]
		Thus, together with~(\ref{old_multiplier_jacobian_eq}), this implies that the determinant $\det J(b)$ is nonzero for all sufficiently small nonzero $b$. Hence, for such $b$, the map $g_{a_0,b}=G_{0,a_0,b}$ has locally independent multipliers of periods
		\[
		2,2,m_1,
		\ldots,m_{2d-2}.
		\]

		The argument for periods
		\[
		1,2,m_1,
		\ldots,m_{2d-2}
		\]
		is the same. Let $\nu(\tau,a,b)$ be the multiplier of the local fixed point of $G_{\tau,a,b}$ from Lemma\ref{lem:fixed-multiplier}, and let $\mu_1(\tau,a,b)$ be the multiplier of the local period-two orbit from~(\ref{mu1mu2_eq}). By Lemmas~\ref{lem:fixed-multiplier} and~\ref{lem:period-two-multiplier},
		\[
		\det
		\begin{pmatrix}
			\partial_a \nu(0,a_0,b) & \partial_b \nu(0,a_0,b)\\
			\partial_a \mu_1(0,a_0,b) & \partial_b \mu_1(0,a_0,b)
		\end{pmatrix}
		=
		\frac{\Phi_1(a_0)}{b^3}+O\left(\frac1{b^2}\right).
		\]
		Therefore
		\[
		\det\left(\left.
		\frac{\partial(\Lambda_1,\ldots,\Lambda_{2d-2},\nu,\mu_1)}
		{\partial(\tau_1,\ldots,\tau_{2d-2},a,b)}
		\right|_{\tau=0,a=a_0}\right)
		=
		\det\left(
		\frac{\partial \lambda_j(f_\tau)}{\partial \tau_k}\bigg|_{\tau=0}
		\right)
		\frac{\Phi_1(a_0)}{b^3}
		+
		O\left(\frac1{b^2}\right).
		\]
		Since $(f,a_0)\in \mathcal V$, Lemma~\ref{lem:generic-jacobian-nondegeneracy} gives
		\[
		\Phi_1(a_0)\neq 0.
		\]
		Thus this determinant is also nonzero for all sufficiently small nonzero $b$.
		
		Choosing $b\neq 0$ sufficiently close to $0$ so that both determinants are nonzero proves the proposition.
	\end{proof}
	
	\section{A three-hole degeneration and the period two multipliers in degree $4$}\label{sec:lemma_proof}
	
	It remains to give a proof of Proposition~\ref{lemma_period_two_degree_four}. The idea of the proof is to degenerate a degree $4$ rational map to a M\"obius map with three holes. All six period-two cycles that we use are created by the degeneration. We first carry out the construction for a generic M\"obius map and generic hole positions and reduce the computation of the six-by-six Jacobian to the computation of the determinant of a three-by-three submatrix. Then we choose a particular three-parameter family for which the latter determinant is easy to evaluate.
	
	Let $M\colon\bbP\to\bbP$ be a M\"obius transformation and let $a_1,a_2,a_3\in\mathbb C$ be three distinct points. For a vector
	\[
	\mathbf b=(b_1,b_2,b_3)\in\bbC^3
	\]
	consider the rational map
	\begin{equation}\label{eq:Ggeneral}
		G_{M,\mathbf a,\mathbf b}(z)
		=M(z)\prod_{r=1}^3\left(1+\frac{b_r}{z-a_r}\right),
	\end{equation}
	indexed by the M\"obius map $M$, the vector of holes $\mathbf a=(a_1,a_2,a_3)$ and the vector $\bfb$. For generic parameters with $b_1b_2b_3\neq0$, this is a rational map of degree $4$, whereas for $\mathbf b=0$ it degenerates to $M$.
	
	For each $j\in\{1,2,3\}$ define
	$$
	c_j:= M^{-1}(a_j).
	$$
	
	\begin{lemma}[Period-two cycles near three holes]\label{lem:three-hole-period-two-cycles}
		Assume that the following conditions hold:
		\begin{enumerate}[(i)]
			\item\label{cond_1} the points $a_1,a_2,a_3$ are distinct;
			\item\label{cond_2} for every $r\in\{1,2,3\}$, the point $c_r=M^{-1}(a_r)$ is finite and does not belong to $\{a_1,a_2,a_3\}$;
			\item\label{cond_3} for every $r\in\{1,2,3\}$, the value $M(a_r)$ is finite and nonzero, and
			\[
			M(a_r)\notin\{c_r\}\cup\{a_s:s\neq r\}.
			\]
		\end{enumerate}
		Then, for all sufficiently small $b_1,b_2,b_3\in\bbC^*$, the map $G_{M,\mathbf a,\mathbf b}$ from~(\ref{eq:Ggeneral}) has six distinct cycles of exact period $2$. Three of them, indexed by $r\in\{1,2,3\}$, have points converging to $a_r$ and $c_r$, respectively, as $\mathbf b\to0$. If $\mu_r$ denotes the multiplier of this cycle, then
		\[
		\mu_r
		=
		\frac{A_r(M,\mathbf a)}{b_r}
		\left(1+O(\|\mathbf b\|)\right),
		\]
		where
		\[
		A_r(M,\mathbf a)
		:=
		-\frac{M'(c_r)\bigl(c_r-M(a_r)\bigr)^2}{M(a_r)}.
		\]
		The other three cycles are indexed by the pairs $1\le r<s\le3$ and have points converging to $a_r$ and $a_s$, respectively. If $\mu_{rs}$ denotes the multiplier of the cycle corresponding to the pair $(r,s)$, then
		\[
		\mu_{rs}
		=
		\frac{A_{rs}(M,\mathbf a)}{b_rb_s}
		\left(1+O(\|\mathbf b\|)\right),
		\]
		where
		\[
		A_{rs}(M,\mathbf a)
		:=
		\frac{\bigl(a_s-M(a_r)\bigr)^2
			\bigl(a_r-M(a_s)\bigr)^2}
		{M(a_r)M(a_s)}.
		\]
		All the asymptotic estimates are locally uniform in $(M,\mathbf a)$ as long as conditions \textup{(i)}--\textup{(iii)} remain satisfied.
	\end{lemma}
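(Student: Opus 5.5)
We follow the scheme of the proof of Lemma~\ref{lem:existence-local-orbits}: blow up the coordinates near the holes and apply the holomorphic implicit function theorem. The main new feature is that a cycle may visit two holes, each with its own scale $b_r$. Throughout, write
\[
G=G_{M,\mathbf a,\mathbf b},\qquad
P_r(z):=\prod_{s\neq r}\Bigl(1+\frac{b_s}{z-a_s}\Bigr),
\]
so that $G(z)=M(z)\bigl(1+\tfrac{b_r}{z-a_r}\bigr)P_r(z)$, and $P_r\to 1$ uniformly near $a_r$ and $c_r$ as $\mathbf b\to0$. This uses conditions (i) and (ii).

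\emph{Cycles of type $r$.} Substitute $z=a_r+b_ru$ and leave $w$ free. The equations $G(z)=w$ and $G(w)=z$ become
\[
w=M(a_r+b_ru)\Bigl(1+\frac1u\Bigr)P_r(a_r+b_ru),
\qquad
a_r+b_ru=G(w).
\]
At $\mathbf b=0$ these reduce to $w=M(a_r)(1+1/u)$ and $a_r=M(w)$. The solution is
\[
w=c_r,\qquad u=\frac{M(a_r)}{c_r-M(a_r)},
\]
which is finite and nonzero by (iii). The Jacobian in $(u,w)$ is triangular, with determinant $-M(a_r)M'(c_r)/u^2\neq0$, exactly as in Lemma~\ref{lem:existence-local-orbits}. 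The implicit function theorem then gives holomorphic $u(M,\mathbf a,\mathbf b)$ and $w(M,\mathbf a,\mathbf b)$ on a full neighborhood of $\mathbf b=0$. The multiplier is computed from
\[
G'(z)=-\frac{M(a_r)}{b_ru^2}\bigl(1+O(\|\mathbf b\|)\bigr),\qquad
G'(w)=M'(c_r)+O(\|\mathbf b\|).
\]
Multiplying these gives $\mu_r=\tfrac{A_r}{b_r}\bigl(1+O(\|\mathbf b\|)\bigr)$.

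\emph{Cycles of type $rs$.} Substitute
\[
z=a_r+b_ru,\qquad w=a_s+b_sv.
\]
The cycle equations become
\[
a_s+b_sv=M(a_r+b_ru)\Bigl(1+\frac1u\Bigr)P_r(a_r+b_ru),\qquad
a_r+b_ru=M(a_s+b_sv)\Bigl(1+\frac1v\Bigr)P_s(a_s+b_sv).
\]
At $\mathbf b=0$ they decouple into
\[
a_s=M(a_r)\Bigl(1+\frac1u\Bigr),\qquad a_r=M(a_s)\Bigl(1+\frac1v\Bigr).
\]
Hence
\[
u_0=\frac{M(a_r)}{a_s-M(a_r)},\qquad v_0=\frac{M(a_s)}{a_r-M(a_s)},
\]
both finite and nonzero by (iii). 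The Jacobian in $(u,v)$ at $\mathbf b=0$ is diagonal, with entries $M(a_r)/u_0^2$ and $M(a_s)/v_0^2$. Both are nonzero, so the implicit function theorem applies again. From $G'(a_r+b_ru)=-\tfrac{M(a_r)}{b_ru^2}\bigl(1+O(\|\mathbf b\|)\bigr)$ and the analogous formula at $a_s$, we get
\[
\mu_{rs}=\frac{M(a_r)M(a_s)}{b_rb_su_0^2v_0^2}\bigl(1+O(\|\mathbf b\|)\bigr)=\frac{A_{rs}}{b_rb_s}\bigl(1+O(\|\mathbf b\|)\bigr).
\]

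\emph{Distinctness and exact period.} The limiting pairs of points are $\{a_r,c_r\}$ and $\{a_r,a_s\}$. Condition (ii) gives $c_r\notin\{a_1,a_2,a_3\}$, so the two points of each cycle are distinct, which gives exact period $2$. The same condition shows that the six limiting pairs are pairwise distinct, provided the $c_r$ are distinct; this holds because $M$ is injective and the $a_r$ are distinct. Hence for small $\mathbf b$ the six cycles are distinct.

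\emph{Main obstacle.} The delicate step is the uniformity in $\mathbf b$. The error terms must be $O(\|\mathbf b\|)$ in the full polydisk of $\mathbf b$, including the regimes where some $b_s$ is much smaller than another, rather than along a single scaling. This is secured by the fact that, after the rescaling, all equations are holomorphic in $(M,\mathbf a,\mathbf b,u,v)$ near $\mathbf b=0$. The factor $1+b_r/(z-a_r)$ becomes $1+1/u$, with no $b_r$ remaining, and the other hole factors $P_r$ are regular. Consequently the implicit solutions and the normalized multipliers $b_r\mu_r/A_r$ and $b_rb_s\mu_{rs}/A_{rs}$ extend holomorphically to $\mathbf b=0$, with value $1$ there. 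This yields the stated estimates, locally uniform in $(M,\mathbf a)$.
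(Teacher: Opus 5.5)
Your proposal is correct and follows essentially the same route as the paper: you rescale $z=a_r+b_ru$ (and $w=a_s+b_sv$ for the mixed cycles), apply the implicit function theorem at $\mathbf b=0$ with the same triangular and diagonal Jacobians, and obtain the multipliers from the same derivative expansion. The one deviation is in the cycles of type $r$. There the paper invokes Lemmas~\ref{lem:existence-local-orbits} and~\ref{lem:period-two-multiplier} for the auxiliary map $f=G_{M,\mathbf a,\mathbf b}\,(1+b_r/(z-a_r))^{-1}$, after checking that $a_r$ is generic for $f$, whereas you redo the implicit function argument directly with all of $\mathbf b$ as holomorphic parameters, which makes the uniformity in the remaining $b_j$ more transparent.
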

	
	\begin{proof}
		For the first three periodic orbits, indexed by a single number $r\in\{1,2,3\}$, the statement of the lemma follows directly from Lemma~\ref{lem:existence-local-orbits} and Lemma~\ref{lem:period-two-multiplier}. Indeed, if we take 
		$$
		f(z) = G_{M,\mathbf a,\mathbf b}(z) \left(1+\frac{b_r}{z-a_r}\right)^{-1},
		$$
		then existence of the corresponding $2$-cycle follows from Lemma~\ref{lem:existence-local-orbits}, while Lemma~\ref{lem:period-two-multiplier} implies that 
		$$
		\mu_r = \frac{A_r(f,\mathbf a)}{b_r} + O(1),
		$$
		where
		\[
		A_r(f,\mathbf a):=-\frac{f'(c_r(f))\bigl(c_r(f)-f(a_r)\bigr)^2}{f(a_r)}
		\]
		and $c_r(f)$ denotes the inverse branch satisfying $f(c_r(f))=a_r$ and converging to $c_r$ as $\bfb\to0$. 		The application of Lemmas~\ref{lem:existence-local-orbits} and~\ref{lem:period-two-multiplier} is justified because, for all sufficiently small $b_j\neq 0$ with $j\neq r$, the point $a_r$ is generic for this auxiliary map $f$. Indeed, the three preimages of $a_r$ under $f$ are finite and simple and converge, respectively, to $c_r$ and to the two points $a_j$ with $j\neq r$, while conditions~\ref{cond_2} and~\ref{cond_3} also imply, for sufficiently small parameters, that $f(a_r)\notin\{0,\infty,a_r\}$ and $f^2(a_r)\neq a_r$.

		Finally, as $\bfb\to 0$, the map $f$ converges to $M$ uniformly on compact subsets not containing the holes. Since according to condition~\ref{cond_2}, the point $c_r$ stays away from the holes, the required asymptotic estimate  
		\[
		\mu_r
		=
		\frac{A_r(M,\mathbf a)}{b_r}
		\left(1+O(\|\mathbf b\|)\right)
		\]
		follows.
		
		For the remaining $2$-cycles, the proof follows the same strategy as the proofs of Lemma~\ref{lem:existence-local-orbits} and Lemma~\ref{lem:period-two-multiplier}.
		
		Fix the parameters $\bfa,\bfb$ and the M\"obius map $M$. For a selected pair of indices $r$, $s$, satisfying $1\le r<s\le 3$, write the points of the prospective cycle near $a_r$ and $a_s$ as
		\[
		z=a_r+b_ru,
		\qquad w=G_{M,\mathbf a,\mathbf b}(z)=a_s+b_sv.
		\]
		Then, the periodicity condition can be rewritten in the form
		$$
		\Phi_1(\bfa,\bfb,M, u, v)=0 \qquad\text{and}\qquad \Phi_2(\bfa,\bfb,M, u, v)=0,
		$$
		where
		$$
		\Phi_1(\bfa,\bfb,M,u,v)
		:=a_s+b_sv
		-M(a_r+b_ru)\left(1+\frac1u\right)
		\prod_{\substack{j=1\\ j\neq r}}^3
		\left(1+\frac{b_j}{a_r-a_j+b_ru}\right)
		$$ 
		and
		$$
		\Phi_2(\bfa,\bfb,M,u,v)
		:=a_r+b_ru
		-M(a_s+b_sv)\left(1+\frac1v\right)
		\prod_{\substack{j=1\\ j\neq s}}^3
		\left(1+\frac{b_j}{a_s-a_j+b_sv}\right)
		$$ 
		At $\bfb=0$ these equations become
		\[
		a_s=M(a_r)\left(1+\frac1u\right),
		\qquad
		a_r=M(a_s)\left(1+\frac1v\right),
		\]
		with solution
		\[
		u=u_{rs}^0:=\frac{M(a_r)}{a_s-M(a_r)},
		\qquad
		v=v_{rs}^0:=\frac{M(a_s)}{a_r-M(a_s)}.
		\]
		Condition~\ref{cond_3} says precisely that these two numbers are finite and nonzero. The Jacobian matrix of $(\Phi_1,\Phi_2)$ with respect to $(u,v)$, evaluated at $(\bfa, 0, M, u_{rs}^0,v_{rs}^0)$, is diagonal, with diagonal entries
		\[
		\frac{M(a_r)}{(u_{rs}^0)^2},
		\qquad
		\frac{M(a_s)}{(v_{rs}^0)^2},
		\]
		and is therefore nonsingular. The implicit function theorem now gives a cycle whose points converge to $a_r$ and $a_s$ and satisfy the asymptotic relations
		$$
		z = a_r+b_r\frac{M(a_r)}{a_s-M(a_r)} +O(\|\bfb\|),
		$$
		$$
		w = a_s+b_s\frac{M(a_s)}{a_r-M(a_s)} +O(\|\bfb\|).
		$$
		This cycle has exact period $2$ by condition~\ref{cond_1}.
		
		It remains to compute the multiplier of this $2$-cycle.
		For $j\in\{r,s\}$, set
		\[
		H_j(\zeta):=M(\zeta)
		\prod_{\substack{k=1\\ k\neq j}}^3
		\left(1+\frac{b_k}{\zeta-a_k}\right),
		\]
		so that
		\[
		G_{M,\mathbf a,\mathbf b}(\zeta)
		=H_j(\zeta)\left(1+\frac{b_j}{\zeta-a_j}\right).
		\]
		At the point $z=a_r+b_ru$, differentiation gives
		\[
		G_{M,\mathbf a,\mathbf b}'(z)
		=H_r'(z)\left(1+\frac1u\right)
		-\frac{H_r(z)}{b_ru^2}.
		\]
		The implicit functions constructed above satisfy
		\[
		u=u_{rs}^0+O(\|\bfb\|),
		\qquad
		v=v_{rs}^0+O(\|\bfb\|),
		\]
		while $H_r(z)=M(a_r)+O(\|\bfb\|)$ and $H_r'(z)=O(1)$. Consequently,
		\[
		G_{M,\mathbf a,\mathbf b}'(z)
		=-\frac{\bigl(a_s-M(a_r)\bigr)^2}{b_rM(a_r)}
		\left(1+O(\|\bfb\|)\right).
		\]
		The analogous computation at $w=a_s+b_sv$ gives
		\[
		G_{M,\mathbf a,\mathbf b}'(w)
		=-\frac{\bigl(a_r-M(a_s)\bigr)^2}{b_sM(a_s)}
		\left(1+O(\|\bfb\|)\right).
		\]
		Thus the multiplier of this cycle satisfies
		\[
		\mu_{rs}
		=G_{M,\mathbf a,\mathbf b}'(z)
		G_{M,\mathbf a,\mathbf b}'(w)
		=\frac{A_{rs}(M,\mathbf a)}{b_rb_s}
		\left(1+O(\|\bfb\|)\right),
		\]
		as required.
		
		The six cycles obtained in this way are pairwise distinct when $\mathbf b$ is sufficiently small. Indeed, their limiting unordered pairs are
		\[
		\{a_r,c_r\},\quad r\in\{1,2,3\},
		\qquad\text{and}\qquad
		\{a_r,a_s\},\quad 1\le r<s\le3,
		\]
		and these pairs are pairwise distinct by conditions~\ref{cond_1} and~\ref{cond_2}.

		Finally, all the equations used above and their limiting Jacobians depend holomorphically on $(M,\mathbf a)$. The excluded denominators and the limiting Jacobian determinants remain nonzero in a neighborhood of every parameter satisfying conditions~\ref{cond_1}--\ref{cond_3}. The implicit functions and all the resulting asymptotic estimates are therefore locally uniform in $(M,\mathbf a)$.
	\end{proof}
	
	For every pair $(r,s)$, satisfying $1\le r<s\le 3$, define
	$$
	\rho_{rs}:= \frac{\mu_{rs}}{\mu_r\mu_s} = \frac{A_{rs}(M,\bfa)}{A_r(M,\bfa)A_s(M,\bfa)}(1+O(\|\bfb\|)).
	$$

	For a future reference, let us make the following obvious statement:
	\begin{lemma}\label{lem:ratio-coordinates}
		Under the assumptions of Lemma~\ref{lem:three-hole-period-two-cycles}, for every sufficiently small $\bfb\in(\bbC^*)^3$, the multipliers $\mu_1,\mu_2,\mu_3$ are nonzero, and the change of coordinates
		\[
		(\mu_1,\mu_2,\mu_3,\mu_{12},\mu_{13},\mu_{23})
		\longmapsto
		(\mu_1,\mu_2,\mu_3,\rho_{12},\rho_{13},\rho_{23})
		\]
		is invertible in a neighborhood of the corresponding multiplier vector. Its inverse is given by
		\[
		\mu_{rs}=\rho_{rs}\mu_r\mu_s,
		\qquad 1\le r<s\le3.
		\]
		Consequently, the differentials of the six multipliers are linearly independent if and only if the differentials of $\mu_1,\mu_2,\mu_3,\rho_{12},\rho_{13},\rho_{23}$ are linearly independent.
	\end{lemma}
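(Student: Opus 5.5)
The plan is to verify two things: that $\mu_1,\mu_2,\mu_3$ do not vanish for small $\mathbf b$, and that the stated change of coordinates is a local biholomorphism. The differential statement is then the chain rule.

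\textbf{Nonvanishing.} By Lemma~\ref{lem:three-hole-period-two-cycles},
\[
\mu_r=\frac{A_r(M,\mathbf a)}{b_r}\bigl(1+O(\|\mathbf b\|)\bigr),
\qquad
A_r(M,\mathbf a)=-\frac{M'(c_r)\bigl(c_r-M(a_r)\bigr)^2}{M(a_r)}.
\]
Each factor of $A_r$ is finite and nonzero:
\begin{itemize}
\item $M'(c_r)\neq0$, since $M$ is M\"obius and $c_r$ is finite by condition~\ref{cond_2};
\item $M(a_r)$ is finite and nonzero, and $c_r\neq M(a_r)$, both by condition~\ref{cond_3}.
\end{itemize}
Hence $A_r\neq0$. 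For $\mathbf b\in(\bbC^*)^3$ small enough that the factor $1+O(\|\mathbf b\|)$ is nonzero, we get $\mu_r\neq0$.

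\textbf{Local biholomorphism.} On the open set
\[
\{\mu_1\mu_2\mu_3\neq0\}\subset\bbC^6,
\]
consider the map
\[
\Theta\colon(\mu_1,\mu_2,\mu_3,\mu_{12},\mu_{13},\mu_{23})\mapsto\Bigl(\mu_1,\mu_2,\mu_3,\frac{\mu_{12}}{\mu_1\mu_2},\frac{\mu_{13}}{\mu_1\mu_3},\frac{\mu_{23}}{\mu_2\mu_3}\Bigr).
\]
It is holomorphic there. The map
\[
(\mu_1,\mu_2,\mu_3,\rho_{12},\rho_{13},\rho_{23})\mapsto(\mu_1,\mu_2,\mu_3,\rho_{12}\mu_1\mu_2,\rho_{13}\mu_1\mu_3,\rho_{23}\mu_2\mu_3)
\]
is polynomial, and composing the two in either order gives the identity. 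So $\Theta$ is a biholomorphism of $\{\mu_1\mu_2\mu_3\neq0\}$ onto itself. One can also see invertibility directly from the Jacobian of $\Theta$: it is block lower triangular, with identity upper-left block and diagonal lower-right block with entries $1/(\mu_r\mu_s)\neq0$.

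\textbf{Differentials.} View the six multipliers as local holomorphic functions on $\Rat_4$ near $G_{M,\mathbf a,\mathbf b}$. The functions $(\mu_1,\mu_2,\mu_3,\rho_{12},\rho_{13},\rho_{23})$ equal $\Theta$ composed with the multiplier vector. By the chain rule, their differentials are the images of the six multiplier differentials under the invertible linear map $D\Theta$. Linear independence is preserved under an invertible linear map in both directions, which gives the equivalence.

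The argument is elementary throughout. The only point needing care is the nonvanishing of $A_r$, which rests exactly on conditions~\ref{cond_2} and~\ref{cond_3}.
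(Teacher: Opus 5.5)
Your argument is correct. It is exactly the routine verification the paper leaves out, since the paper calls the lemma an ``obvious statement'' and gives no proof: $A_r\neq0$ by conditions (ii)--(iii), so $\mu_r\neq0$, and the chain rule through the biholomorphism $\Theta$ of $\{\mu_1\mu_2\mu_3\neq0\}$, whose inverse is $\mu_{rs}=\rho_{rs}\mu_r\mu_s$, carries linear independence of differentials in both directions. One small detail to add: $M'(c_r)$ is also finite, because $M(c_r)=a_r\in\bbC$ means $c_r$ is not the pole of $M$.
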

	
	\begin{lemma}[Limiting block form]\label{lem:limiting-block-form}
		Let $x=(x_1,x_2,x_3)$ be holomorphic parameters for a local three-parameter family $x\mapsto(M_x,\bfa_x)$ satisfying the assumptions of Lemma~\ref{lem:three-hole-period-two-cycles}, and set
		\[
		R_{rs}(x):=
		\frac{A_{rs}(M_x,\bfa_x)}
		{A_r(M_x,\bfa_x)A_s(M_x,\bfa_x)}.
		\]
		For $t\neq0$ sufficiently small, define the Jacobian matrix
		\[
		J(t,x):=
		\left.
		\frac{\partial(\mu_1,\mu_2,\mu_3,
		\rho_{12},\rho_{13},\rho_{23})}
		{\partial(b_1,b_2,b_3,x_1,x_2,x_3)}
		\right|_{b_1=b_2=b_3=t}.
		\]
		Then the matrix obtained from $J(t,x)$ by multiplying its first three rows by $t^2$ extends holomorphically to $t=0$, and its value at $t=0$ is
		\[
		\begin{pmatrix}
		-\operatorname{diag}(A_1,A_2,A_3)&0\\[0.4em]
		*&\displaystyle
		\frac{\partial(R_{12},R_{13},R_{23})}
		{\partial(x_1,x_2,x_3)}
		\end{pmatrix},
		\]
		where $A_r=A_r(M_x,\bfa_x)$ and each block is of size $3\times3$.
	\end{lemma}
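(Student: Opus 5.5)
The plan is to write each multiplier as its leading term times a holomorphic correction, differentiate, and then rescale. The key is to make the ``$1+O(\|\bfb\|)$'' corrections in Lemma~\ref{lem:three-hole-period-two-cycles} into genuinely holomorphic functions of $(\bfb,x)$, so that they can be differentiated.

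\textbf{Step 1: holomorphic remainders.} Revisit the proof of Lemma~\ref{lem:three-hole-period-two-cycles}. The implicit-function solutions $u,v$ (and, for the single-hole cycles, the corresponding $u$ and $w$) are holomorphic in $(\bfb,x)$ on a full neighborhood of $\bfb=0$, including $b_r=0$. The derivative formula $G'(z)=H_r'(z)(1+1/u)-H_r(z)/(b_ru^2)$ then shows that $b_rG'(z)$ is holomorphic near $\bfb=0$. Consequently
\[
b_r\mu_r=A_r(M_x,\bfa_x)+E_r(\bfb,x),\qquad b_rb_s\mu_{rs}=A_{rs}(M_x,\bfa_x)+E_{rs}(\bfb,x),
\]
with $E_r,E_{rs}$ holomorphic and vanishing at $\bfb=0$. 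For the single-hole cycles, note that the auxiliary map $f$ from that proof depends holomorphically on the other $b_j$, including at $b_j=0$. Hence $\tilde\mu_r:=b_r\mu_r$ is holomorphic and nonvanishing near $\bfb=0$. Since $\mu_{rs}/(\mu_r\mu_s)=(b_rb_s\mu_{rs})/(\tilde\mu_r\tilde\mu_s)$, the ratio $\rho_{rs}$ is holomorphic in $(\bfb,x)$ near $\bfb=0$, and
\[
\rho_{rs}(0,x)=R_{rs}(x).
\]

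\textbf{Step 2: rows for $\rho_{rs}$.} Because $\rho_{rs}$ is holomorphic, all its partial derivatives are holomorphic at $t=0$. The last three rows of $J$ therefore extend holomorphically, with no rescaling needed. At $t=0$, the $x$-derivatives equal $\partial R_{rs}/\partial x_k$, which gives the lower-right block. The $\bfb$-derivatives are holomorphic and form the unspecified block $*$.

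\textbf{Step 3: rows for $\mu_r$.} Write $\mu_r=\tilde\mu_r/b_r$. Then
\[
\partial_{b_r}\mu_r=-\tilde\mu_r/b_r^2+(\partial_{b_r}\tilde\mu_r)/b_r .
\]
For $q\ne r$ and for the $x$-derivatives,
\[
\partial_{b_q}\mu_r=(\partial_{b_q}\tilde\mu_r)/b_r,\qquad \partial_{x_k}\mu_r=(\partial_{x_k}\tilde\mu_r)/b_r .
\]
Set $b_1=b_2=b_3=t$ and multiply by $t^2$. The first entry becomes $-\tilde\mu_r+t\,\partial_{b_r}\tilde\mu_r\to -A_r$, and every other entry becomes $t\cdot(\text{holomorphic})\to 0$. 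This yields the upper block $-\operatorname{diag}(A_1,A_2,A_3)$ and the zero block, with holomorphic extension to $t=0$.

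\textbf{Main obstacle.} The obstacle is entirely in Step 1. Lemma~\ref{lem:three-hole-period-two-cycles} only states asymptotics, and it must be upgraded to holomorphy of $\tilde\mu_r$ and $b_rb_s\mu_{rs}$ across the coordinate hyperplanes $b_j=0$. To do this, I would check that the functions $\Phi_1,\Phi_2$ are holomorphic in $\bfb$ near $0$, in particular in the terms $b_j/(a_r-a_j+b_ru)$ with $j\neq r$, which are holomorphic there because $a_r\ne a_j$. I would also check that the inverse branch $c_r(f)$ is holomorphic in all parameters. Uniformity in $x$ then follows from the local uniformity already established.
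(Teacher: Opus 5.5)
Your proposal is correct and follows the paper's proof essentially step for step: you set $\widehat\mu_r=b_r\mu_r$ and $\widehat\mu_{rs}=b_rb_s\mu_{rs}$, show that these, and hence $\rho_{rs}=\widehat\mu_{rs}/(\widehat\mu_r\widehat\mu_s)$, extend holomorphically to $\bfb=0$, and then read off the blocks by differentiating $\mu_r=\widehat\mu_r/b_r$, setting $b_1=b_2=b_3=t$ and multiplying by $t^2$. The only difference is in justifying the holomorphic extension across the hyperplanes $b_j=0$: you rerun the implicit function theorem on a full neighborhood of $\bfb=0$, while the paper simply asserts it from the analyticity of the multipliers together with the asymptotics of Lemma~\ref{lem:three-hole-period-two-cycles} (implicitly a Riemann-extension argument); both are valid.
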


	\begin{proof}
		Since multipliers of simple periodic orbits are locally analytic functions of the parameters, Lemma~\ref{lem:three-hole-period-two-cycles} shows that the functions
		\[
		\widehat\mu_r(\bfb, x):=b_r\mu_r,
		\qquad
		\widehat\mu_{rs}(\bfb,x):=b_rb_s\mu_{rs}
		\]
		extend holomorphically to $\bfb=0$. Their values there are, respectively,
		\[
		\widehat\mu_r(0,x)=A_r(M_x,\bfa_x),
		\qquad
		\widehat\mu_{rs}(0,x)=A_{rs}(M_x,\bfa_x).
		\]
		Since the functions $A_r$ are nonzero under the assumptions of Lemma~\ref{lem:three-hole-period-two-cycles}, we may shrink the parameter neighborhood so that all $\widehat\mu_r$ are nonzero. Hence
		\[
		\rho_{rs}
		=\frac{\widehat\mu_{rs}}
		{\widehat\mu_r\widehat\mu_s}
		\]
		extends holomorphically to $\bfb=0$ and satisfies $\rho_{rs}(0,x)=R_{rs}(x)$.
		
		Write the Jacobian in block form as
		\[
		J(t,x)=
		\begin{pmatrix}
		P(t,x)&Q(t,x)\\
		S(t,x)&T(t,x)
		\end{pmatrix},
		\]
		where $P,Q,S,T$ are three-by-three matrices; the first three rows of $J(t,x)$ correspond to $\mu_1,\mu_2,\mu_3$ and the first three columns to $b_1,b_2,b_3$. Since $\mu_r=\widehat\mu_r/b_r$, we have
		\[
		\frac{\partial\mu_r}{\partial b_j}
		=
		\begin{cases}
		-\dfrac{\widehat\mu_r}{b_r^2}
		+\dfrac1{b_r}\dfrac{\partial\widehat\mu_r}{\partial b_r},
		&j=r,\\[1.2ex]
		\dfrac1{b_r}\dfrac{\partial\widehat\mu_r}{\partial b_j},
		&j\neq r,
		\end{cases}
		\qquad
		\frac{\partial\mu_r}{\partial x_j}
		=\frac1{b_r}\frac{\partial\widehat\mu_r}{\partial x_j}.
		\]
		After setting $b_1=b_2=b_3=t$ and multiplying the first three rows of $J(t,x)$ by $t^2$, these identities give
		\[
		t^2P(t,x)=-\operatorname{diag}(A_1,A_2,A_3)+O(t),
		\qquad
		t^2Q(t,x)=O(t).
		\]
		In particular, the entries on the left-hand sides of both identities extend holomorphically to $t=0$.
		
		Finally, the holomorphic extension of the functions $\rho_{rs}$ implies that $S(t,x)$ and $T(t,x)$ are holomorphic at $t=0$, and
		\[
		T(0,x)=
		\frac{\partial(R_{12},R_{13},R_{23})}
		{\partial(x_1,x_2,x_3)}.
		\]
		Combining these four block limits proves the lemma.
	\end{proof}

	\begin{proof}[Proof of Proposition~\ref{lemma_period_two_degree_four}]
		Consider the three-parameter family
		\[
		M_q(z)=qz,
		\qquad
		a_1=1,\quad a_2=\xi,\quad a_3=\xi\eta,
		\]
		with parameters $(x_1,x_2,x_3)=(q,\xi,\eta)$. For this family, the coefficients from Lemma~\ref{lem:three-hole-period-two-cycles} are
		\[
		A_r=-\frac{a_r(1-q^2)^2}{q^2}
		\]
		and
		\[
		A_{rs}
		=\frac{(a_s-qa_r)^2(a_r-qa_s)^2}
		{q^2a_ra_s}.
		\]
		Consequently,
		\[
		R_{rs}
		=F\left(\frac{a_s}{a_r},q\right),
		\]
		where
		\begin{equation}\label{eq:F-residual}
		F(\zeta,q)
		:=
		\frac{q^2}{(1-q^2)^4}
		(q-\zeta)^2\left(q-\frac1\zeta\right)^2.
		\end{equation}
		In particular,
		\[
		R_{12}=F(\xi,q),
		\qquad
		R_{23}=F(\eta,q),
		\qquad
		R_{13}=F(\xi\eta,q).
		\]
		
		Differentiating~(\ref{eq:F-residual}) with respect to $\zeta$, we obtain
		\[
		F_\zeta(\zeta,q)
		=
		-\frac{2q^3}{(1-q^2)^4}
		(q-\zeta)\left(q-\frac1\zeta\right)
		\left(1-\frac1{\zeta^2}\right).
		\]
		Thus $F_\zeta(-1,q)=0$, for any $q\neq \pm 1$. If we order the functions as $(R_{23},R_{12},R_{13})$ and the parameters as $(q,\xi,\eta)$, then at $\eta=-1$ the residual Jacobian becomes the lower triangular matrix
		\[
		\frac{\partial(R_{23},R_{12},R_{13})}
		{\partial(q,\xi,\eta)}
		=
		\begin{pmatrix}
		F_q(-1,q)&0&0\\[0.4em]
		F_q(\xi,q)&F_\zeta(\xi,q)&0\\[0.4em]
		F_q(-\xi,q)&-F_\zeta(-\xi,q)&
		\xi F_\zeta(-\xi,q)
		\end{pmatrix},
		\]
		where $F_\zeta$ and $F_q$ denote the corresponding partial derivatives of the function $F$ from~(\ref{eq:F-residual}).

		We now choose
		\begin{equation}\label{par_values_eq}
		q=2,
		\qquad
		\xi=3,
		\qquad
		\eta=-1.
		\end{equation}
		This gives $M(z)=2z$ and $(a_1,a_2,a_3)=(1,3,-3)$. A direct computation shows that these parameters satisfy the assumptions of Lemma~\ref{lem:three-hole-period-two-cycles}. Moreover, we get
		\[
		F(-1,q)=\frac{q^2}{(1-q)^4},
		\qquad
		F_q(-1,q)=\frac{2q(1+q)}{(1-q)^5},
		\]
		so $F_q(-1,2)\neq0$. The displayed formula for $F_\zeta$ also gives
		\[
		F_\zeta(3,2)\neq0,
		\qquad
		F_\zeta(-3,2)\neq0.
		\]
		Hence all three diagonal entries of the residual Jacobian are nonzero. Permuting its rows or columns does not affect nonsingularity, and therefore
		\[
		\det
		\frac{\partial(R_{12},R_{13},R_{23})}
		{\partial(q,\xi,\eta)}
		\neq0.
		\]
		At the same parameter values~(\ref{par_values_eq}), we also get
		\[
		(A_1,A_2,A_3)
		=
		\left(-\frac94,-\frac{27}{4},\frac{27}{4}\right),
		\]
		so $A_1A_2A_3\neq0$. Lemma~\ref{lem:limiting-block-form} now implies that, for every sufficiently small nonzero $t$, the functions
		\[
		\mu_1,\mu_2,\mu_3,
		\rho_{12},\rho_{13},\rho_{23}
		\]
		have linearly independent differentials in the family with parameters $(b_1,b_2,b_3,q,\xi,\eta)$ at $b_1=b_2=b_3=t$ and $q,\xi,\eta$ from~(\ref{par_values_eq}). By Lemma~\ref{lem:ratio-coordinates}, the same holds for the six period-two multipliers
		\[
		\mu_1,\mu_2,\mu_3,
		\mu_{12},\mu_{13},\mu_{23}
		\]
		of the corresponding degree $4$ rational map $G_{M_q,\bfa,\bfb}$, provided by Lemma~\ref{lem:three-hole-period-two-cycles}. Furthermore, according to Lemma~\ref{lem:three-hole-period-two-cycles}, when $t$ is sufficiently small, these six period two cycles are distinct, hence, non-multiple, since degree $4$ rational maps cannot have more than $6$ period two cycles. Proposition~\ref{prop_local_to_global} therefore implies that the six period-two multipliers on $\Rat_4$ are algebraically independent over $\bbC$.
	\end{proof}

\bibliographystyle{amsalpha}
\bibliography{paper_v1}

\end{document}